\numberwithin{equation}{section}
\newtheorem{defn}{Definition}[section]
\newtheorem{theorem}{Theorem}[section]
\newtheorem{corollary}[theorem]{Corollary}
\newtheorem{lemma}[theorem]{Lemma}
\newtheorem{remark}[theorem]{Remark}
\def \begineq{\begin{equation}}
\def \endeq{\end{equation}}
\def \bb{\mathbb}
\def \CC{{\bb{C}}}
\def \PP{{\bb{P}}}
\def \QQ{{\bb{Q}}}
\def \RR{{\bb{R}}}
\def \ZZ{{\bb{Z}}}
\def \({\left(}
\def \){\right)}
\def \<{\langle}
\def \>{\rangle}
\def \bar{\overline}
\def \deg{\mathrm{deg}}
\begin{document}

\title[ Quasitoric orbifolds ]{Almost complex structure, blowdowns and McKay
 correspondence in  quasitoric orbifolds}

\author[Saibal Ganguli]{Saibal Ganguli}

\address{Departamento de
Matem\'aticas, Universidad de los Andes, Bogota, Colombia}

\email{saibalgan@gmail.com}

\author{Mainak Poddar}
\address{ Stat-Math Unit, Indian
Statistical Institute, Kolkata, India; and Departamento de
Matem\'aticas, Universidad de los Andes, Bogota, Colombia}

\email{mainakp@gmail.com}

\subjclass[2000]{Primary 55N32, 57R18; Secondary 53C15, 14M25}

\keywords{almost complex, blowdown, McKay
correspondence, orbifold,   pseudoholomorphic map, quasitoric}

\abstract
 We prove the existence of  invariant almost complex
 structure on any positively omnioriented
 quasitoric orbifold. We construct blowdowns. We define
 Chen-Ruan cohomology ring for any omnioriented quasitoric orbifold.
 We prove that the Euler characteristic of this cohomology
 is preserved by a crepant blowdown. We prove that the Betti numbers are
 also preserved if dimension is less or equal to six. In particular, our work
 reveals a new form of McKay correspondence for orbifold toric varieties
 that are not Gorenstein. We illustrate with an example.
 \endabstract

 \maketitle

 \section{Introduction}\label{intro}

 McKay correspondence \cite{[Re]} has been studied widely for complex
 algebraic varieties with only Gorenstein or $SL$ orbifold singularities. A cohomological
  version of this correspondence says that the Hodge numbers (and Betti
numbers) of Chen-Ruan
 cohomology (with compact support) \cite{[CR]} are preserved under crepant blowup.
  This was proved
 in \cite{[LP]} and \cite{[Yas]} for complete algebraic varieties with $SL$ quotient
 singularities following fundamental work of \cite{[Bat]} and \cite{[DL]} in the local
 case. It makes sense to ask if such a correspondence holds for Betti
  numbers when the orbifold has almost complex structure only.
   However the main ingredients in the algebraic proof,
 namely motivic integration and Hodge structure, may no longer
 be available.

 From a different perspective, the topological properties of
 quasitoric spaces introduced by
 Davis and Januskiewicz \cite{[DJ]}, have been studied
 extensively. However not much attention has been given to the
 study of equivariant maps between them. In this article, which is a
 sequel to  \cite{[GP]}, we construct equivariant blowdown maps between
 primitive omnioriented quasitoric orbifolds and prove certain McKay
 type correspondence for them.
  These spaces do not have complex or almost complex structure in general.

Quasitoric orbifolds \cite{[PS]}  are topological generalizations
of projective simplicial toric varieties or symplectic toric
orbifolds \cite{[LT]}. They are even dimensional spaces with
action of the compact torus of half dimension such that the orbit
space has the structure of a simple polytope.
 We only work with primitive quasitoric
orbifolds. The orbifold singularities of these spaces correspond
to analytic singularities.
   An omniorientation is a choice of orientation
  for the quasitoric orbifold as well
 as for each invariant suborbifold of codimension two.
 When these orientations are compatible the quasitoric orbifold is called
 positively omnioriented, see section \ref{omnio} for details.  We
prove the existence of invariant almost complex structure on
positively omnioriented quasitoric orbifolds (Theorem
\ref{thmacs}) by adapting the technique of Kustarev \cite{[Kus]}
for quasitoric manifolds.
 We also build a stronger version of Kustarev's result: Theorem \ref{extn} and
Corollary \ref{extc}. These may be of use to even those who are
mainly interested in quasitoric manifolds.

 Chen-Ruan cohomology was originally defined for almost complex orbifolds in
\cite{[CR]}. There the almost complex structure on normal bundles
of singular strata is used to determine the grading of the
cohomology. An omniorientation, together with the torus action,
determines a complex structure on the normal bundle of every
invariant suborbifold of a quasitoric orbifold. Moreover the
singular locus is a subset of the union of invariant suborbifolds.
Thus we can define Chen-Ruan cohomology groups for any
omnioriented quasitoric orbifold, see section \ref{crc}. We also
define a ring structure for this cohomology in section \ref{ring}
following the approach of \cite{[CH]}. The Chen-Ruan cohomology of
the same quasitoric orbifold is in general different for different
omniorientations. For a positively omnioriented quasitoric
orbifold with the almost complex structure of Theorem
\ref{thmacs}, our definition of Chen-Ruan cohomology ring agrees
with that of \cite{[CR]}.

The blowdown maps are continuous, and they are diffeomorphism of
orbifolds away from the exceptional set.  They are not morphisms
of orbifolds (see \cite{[ALR]} for definition).  In some cases
they are analytic near the exceptional set, see Lemma
\ref{pseubd}. (In these cases they are pseudoholomorphic in a
natural sense, see Definition \ref{smfn}.) For these  we can
compute the pull-back of the canonical sheaf and test if  the
blowdown is crepant in the sense of complex geometry: The pull
back of the canonical sheaf of the blowdown is the canonical sheaf
of the blowup. However the combinatorial condition this
corresponds to, makes sense in general and may be applied to an
arbitrary blowdown. We work with this generalized notion of
crepant blowdown, see section \ref{crepbd}.

We prove the conservation of Betti numbers of Chen-Ruan cohomology
under crepant blowdowns when the quasitoric orbifold has dimension
less than or equal to six (Theorem \ref{thmmckay6}).
 We also prove the conservation of Euler characteristic of this
 cohomology under crepant blowdowns in arbitrary dimension
 (Theorem \ref{thmeuler}). This implies that the rational orbifold
  $K$-groups \cite{[AR]} are also preserved, see section \ref{kgps}.
  These statements hold under the condition that
 the  omnioriented quasitoric orbifolds are quasi-$SL$, a
  generalization of $SL$; see Definition \ref{quasisl}.

  The validity of McKay correspondence for
Betti numbers remains an interesting open problem in higher
dimensions. One might try to make use of the local results from
motivic integration, namely correspondence of Betti numbers of
Chen-Ruan cohomology with compact support for crepant blowup of a
Gorentstein quotient singularity $\CC^n/G$ \cite{[Bat],[DL]}.
However such efforts are impeded by the fact that the
correspondence obtained from motivic integration is not natural.
 However, we prove a very basic inequality about the
behavior of the second Betti number under crepant blowup in Lemma
\ref{thmh2}. We also give an example of McKay correspondence for
Betti numbers when dimension is eight in section \ref{exapl}. This
example is particularly interesting as it corresponds to the
weighted projective space $\PP(1,1,3,3,3)$ which is not a
Gorenstein or $SL$ orbifold. Hence McKay correspondence as studied
in complex algebraic geometry does not apply to it. However under
suitable choice of omniorientation it is quasi-$SL$ and McKay
correspondence holds. Note that the blowup is not a toric blowup
in the sense of algebraic geometry.

 In \cite{[GP]}, we constructed examples
of four dimensional quasitoric orbifolds that are not toric
varieties. We also constructed pseudoholomorphic blowdowns between
them. Our brief study of pseudo-holomorphicity of blowdowns
 in section \ref{pseudo} shows that every primitive positively
omnioriented quasitoric orbifold of dimension four has a
pseudoholomorphic resolution of singularities, see Theorem
\ref{thmresol}. The result may hold in dimension six as well, but
developing pseudoholomorphic blowdowns in dimension six and higher
would need further work.

 \section{Quasitoric orbifolds}\label{smooth}

 In this section we
review the combinatorial construction  of quasitoric
orbifolds. We also  construct an explicit
orbifold atlas for them and list a few important properties.
 The notations established here will be important for
the rest of the article.

\subsection{Construction}   Fix a copy $N$ of $\ZZ^n$ and let $ T_N := (N \otimes_{\ZZ} \RR) / N \cong \RR^n/ N $
 be the corresponding $n$-dimensional torus. A primitive vector in $N$, modulo sign,
  corresponds to a circle subgroup.
 of $T_N$. More generally, suppose $M$ is a submodule of $N$ of rank $m$. Then
 \begin{equation}\label{TM} T_M := (M \otimes_{\ZZ} \RR) /M \end{equation}
  is a torus of dimension $m$.
  Moreover there is a natural homomorphism of Lie
 groups $\xi_M: T_M \to T_N$ induced by the inclusion $M \hookrightarrow N$.
\begin{defn}\label{tlambda}  Define T(M) to be the image
  of $T_M$ under $\xi_M$.
 If $M$ is generated  by a vector $\lambda \in N$, denote $T_M$
 and $T(M)$ by $T_{\lambda}$ and $T(\lambda)$ respectively.
\end{defn}

Usually a polytope is defined to be the convex hull of a finite
set of points in $\RR^n$. To keep our notation manageable, we will
take a more liberal interpretation of the term polytope.
\begin{defn}\label{polytope} A polytope $P$ will denote a subset of $\RR^n$ which
is diffeomorphic, as manifold with corners, to the convex hull $Q$
of a finite number of points in $\RR^n$. Faces of $P$ are the
images of the faces of $Q$ under the diffeomorphism.
\end{defn}

Let $P$ be a simple polytope in $\RR^n$, i.e. every vertex of $P$
is the intersection of exactly $n$ codimension one faces (facets).
Consequently every $k$-dimensional face $F$ of $P$ is the intersection of
 a unique collection of $n-k$ facets.
 Let $ \mathcal{F}:= \{F_1,\ldots, F_m\}$ be the
set of facets of $P$.

\begin{defn}\label{charfn}
 A function $\Lambda: \mathcal{F} \to N $
  is called a characteristic function for $P$ if
 $ \Lambda(F_{i_1}), \ldots, \Lambda(F_{i_k})$ are linearly independent whenever
 $F_{i_1}, \ldots, F_{i_k}$intersect at a face in $P$.  We write $\lambda_i$
 for $\Lambda(F_i)$ and call it a characteristic vector.
\end{defn}

\begin{remark}\label{primi} In this article we  assume that all characteristic
 vectors are primitive.
Corresponding quasitoric orbifolds have been termed primitive
quasitoric orbifold in \cite{[PS]}. They are characterized by the
codimension of singular locus being greater than or equal to four.
\end{remark}

 \begin{defn}\label{NF}
 For any face $F$ of $P$, let $\mathcal{I}(F) = \{ i  | F \subset F_i  \}$.
Let $\Lambda$ be a characteristic function for P.  Let $N(F)$ be the submodule of $N$ generated
by $\{ \lambda_i:  i \in \mathcal{I}(F) \} $. Note that $\mathcal{I}(P) $ is empty  and
$N(P) = \{0\}$. \end{defn}

  For any point $p \in P$, denote by $F(p)$
 the face of  $P$ whose relative interior contains $p$. Define an equivalence relation $\sim$ on the space
 $P \times T_N$ by
 \begin{equation} \label{defequi}
 (p,t) \sim (q,s) \; {\rm if \; and\; only\; if\;} p=q \; {\rm and} \; s^{-1}t \in T(N({F(p)}))
 \end{equation}
 Then the quotient space $X :=P \times T_N/ \sim$ can be given the structure of a $2n$-dimensional
  quasitoric orbifold. Moreover any  $2n$-dimensional primitive quasitoric orbifold may be obtained in
  this way, see \cite{[PS]}. We refer to the pair $(P,\Lambda)$ as a model for the
  quasitoric orbifold. The space $X$ inherits an action of $T_N$ with orbit space
   $P$ from the natural action
  on $P \times T_N$. Let $\pi: X \to P$ be the associated quotient map.

   The space $X$ is a manifold if the characteristic vectors $ \lambda_{i_1}, \ldots, \lambda_{i_k}$
   generate a unimodular subspace of $N$ whenever the facets $F_{i_1}, \ldots, F_{i_k}$ intersect.
    The points $\pi^{-1}(v) \in X$, where $v$ is any vertex of $P$,
are fixed by the action of $T_N$. For simplicity we will denote
the point $\pi^{-1}(v)$  by $v$ when there is no confusion.

\subsection{Orbifold charts}\label{diffs}
Consider open
 neighborhoods $U_v \subset P$ of the vertices $v$  such that
  $U_v$ is the complement in $P$ of all edges
 that do not contain $v$.
 Let
 \begin{equation}
X_v := \pi^{-1}(U_v) =  U_v \times T_N / \sim
\end{equation}
For a face $F$ of $P$ containing $v$ there is a natural inclusion of
$N(F)$ in $N(v)$.
  It induces an injective homomorphism $T_{N(F)} \to T_{N(v)}$ since a basis of $N(F)$ extends
  to a basis of $N(v)$. We will regard $T_{N(F)}$ as a
  subgroup of $T_{N(v)}$ without confusion.
Define an equivalence relation $\sim_v$ on $U_v \times T_{N(v)}$ by
$(p,t)\sim_v (q,s)$ if
 $p=q$ and $s^{-1}t \in T_{N(F)}$ where $F$ is the face whose relative interior contains $p$.
Then the space
\begin{equation}
 \widetilde{X}_v:= U_v \times T_{N(v)}/ \sim_v
\end{equation}
is $\theta$-equivariantly diffeomorphic to an open set in $\CC^n$,
where  $\theta: T_{N(v)} \to U(1)^n$ is an isomorphism, see \cite{[DJ]}. This means that
   there exists a diffeomorphism $f: \widetilde{X}_v \to B \subset \CC^n  $ such that
   $f(t\cdot x) = \theta(t) \cdot f(x)$ for all $x \in \widetilde{X}_v $. This will be
   evident from the subsequent discussion.

The map $\xi_{N(v)} : T_{N(v)} \to T_N$ induces a map $\xi_v:
\widetilde{X}_v \to X_v$ defined by
   $\xi_v([(p,t)]^{\sim_v}) = [(p,\xi_{N(v)}(t)) ]^{\sim}$ on equivalence classes.
    The kernel of $\xi_{N(v)}$,  $G_v = N/N(v)$, is a finite
  subgroup of $T_{N(v)}$ and therefore has a natural smooth, free action on $T_{N(v)}$
  induced by the group operation.  This induces smooth action of $G_v$ on
  $\widetilde{X}_v$. This action is not free in general. Since $T_N \cong T_{N(v)}/G_v $,  $X_v$
 is homeomorphic to the quotient space $\widetilde{X}_v/G_v$.  An orbifold chart
 (or uniformizing system) on $X_v$ is
  given by $(\widetilde{X}_v, G_v, \xi_v)$.

 Let $(p_1, \ldots, p_n)$ denote the standard
    coordinates on  $\RR^n \supset P$.
  Let $(q_1, \ldots, q_n)$ be the coordinates on $N \otimes \RR$ that correspond to the standard
  basis of $N$.
 Let $\{u_1, \ldots, u_n\}$ be the standard basis of $N$.
 Suppose the characteristic vectors $u_i$ are assigned to the facets $p_i=0$ of the
 cone $ \RR^n_{\ge} $.
     In this case  there is a homeomorphism $\phi: (\RR^n_{\ge}
     \times T_N/\sim ) \to  \RR^{2n}$
    given by
     \begin{equation}
    x_i= \sqrt{p_i} \cos (2 \pi q_i), \;\;
    y_i = \sqrt{p_i} \sin (2 \pi q_i) \;\; {\rm where} \; i=1, \ldots, n.
    \end{equation}

\begin{remark} The square root over $p_i$ is necessary to ensure that the
orbit map $\pi: \RR^{2n} \to \RR^n_{\ge} $ is smooth.
\end{remark}

      We  define a homeomorphism
      $\phi_{v}: \widetilde{X}_v  \to \RR^{2n}$ as follows.
      Assume without loss of generality that $F_1, \ldots, F_n$ are the facets of
      $U_v$. Let the equation of $F_i$ be $p_{i,v}= 0$.
     Assume that $p_{i,v} > 0$ in the interior of $U_v$ for every $i$.
 Let $\Lambda_{v}$ be the corresponding matrix of characteristic
 vectors \begin{equation}\label{lambdav}
 \Lambda_{v}= [\lambda_{1} \ldots \lambda_{n} ]. \end{equation}

     If ${\bf q}_{v}= (q_{1,v},\ldots, q_{n,v})^t$ are angular coordinates of an element of $T_N$ with respect
     to the basis $\{ \lambda_1, \ldots, \lambda_n \}$ of $N \otimes \RR$, then  the
     standard coordinates ${\bf q} =(q_1, \ldots, q_n)^t$ may be expressed as
     \begin{equation}\label{chcoord1}
     {\bf q} = \Lambda_{v} {\bf q}_{v}.
     \end{equation}
     Then define the homeomorphism $\phi_{v}: \widetilde{X}_v \to \RR^{2n}$ by
     \begin{equation}\label{homeo}
      x_i = x_{i,v}:= \sqrt{p_{i, v}} \cos(2 \pi q_{i,v} ), \quad
      y_i = y_{i,v}:= \sqrt{p_{i,v} } \sin( 2 \pi q_{i,v} ) \quad {\rm for}\;
      i=1,\ldots,n
     \end{equation}
We write
\begin{equation}\label{cplxcoor}
 z_i = x_i + \sqrt{-1} y_i, \quad {\rm and} \quad
z_{i,v} = x_{i,v} + \sqrt{-1}y_{i,v}
\end{equation}

 Now consider the action of $G_v = N/N(v)$ on $\widetilde{X}_v$. An element
     $g$ of $G_v$ is represented by a vector $\sum_{i=1}^n a_i \lambda_i $ in
    $N$ where each $a_i \in  \QQ$.  The action of $g$ transforms the coordinates
    $q_{i,v}$ to $q_{i,v} + a_i$. Therefore
    \begin{equation}\label{action}
     g\cdot (z_{1,v},\ldots, z_{n,v}) = (e^{2\pi \sqrt{-1} a_1} z_{1,v},\ldots, e^{2\pi \sqrt{-1}a_n} z_{n,v}).
     \end{equation}

We may identify $G_v$ with the cokernel of the linear map
$\Lambda_{v}: N \to N$. Then standard arguments using the Smith
normal form of the matrix $\Lambda_{v}$ imply that
\begin{equation}\label{ordgv} o(G_v)= |\det \Lambda_{v} |.
\end{equation}

\subsection{Compatibility of charts}\label{comp}
We show the compatibility of the charts $(\widetilde{X}_v, G_v,
\xi_v) $. Let $v_1$ and $v_2$ be two vertices so that the minimal
face $S$ of $P$ containing both  has dimension $s \ge 1$. Then
$X_{v_1} \cap X_{v_2}$ is nonempty.
 Assume facets $(F_1,\ldots, F_s, F_{s+1}, \ldots, F_n)$ meet at vertex $v_1$ and
 facets $( F_{n+1} , \ldots, F_{n+s}, F_{s+1},\ldots, F_n)$ meet at $v_2$.
We take \begin{equation} \begin{array}{l}
 \Lambda_{v_1}= [\lambda_1, \ldots, \lambda_s,
\lambda_{s+1}, \ldots, \lambda_n] \; {\rm and} \\
 \Lambda_{v_2}= [\lambda_{n+1}, \ldots, \lambda_{n+s},\lambda_{s+1}, \ldots, \lambda_n] .
\end{array} \end{equation}
 Then
\begin{equation}\label{chcoord2}
  {\bf q}_{v_2} = \Lambda_{v_2}^{-1} \Lambda_{v_1} {\bf
  q}_{v_1}
\end{equation}
 Suppose \begin{equation}\label{cjk}
 \lambda_k = \sum_{j=s+1}^{n+s} c_{j,k} \lambda_j, \, 1\le k \le s.
\end{equation}
Then by \eqref{chcoord2},
\begin{equation}
 \begin{array}{ll}
 q_{j,v_2} = \sum_{k=1}^s c_{n+j,k}\, q_{k,v_1}  & {\rm if}\, 1\le j \le s  \\
 q_{j,v_2} = \sum_{k=1}^s c_{j,k}\, q_{k,v_1} + q_{j,v_1} & {\rm if} \, s+1\le j \le n . \\
\end{array}
\end{equation}

 Let the facets $F_j, \, j= 1, \ldots, n+s,$ be defined
by $\widehat{p}_j =0$ such that $\widehat{p}_j > 0$ in the
interior of the polytope $P$. Then the coordinates \eqref{homeo}
on $\widetilde{X}_{v_2}$ and $\widetilde{X}_{v_1}$ are related as
follows.
\begin{equation}\label{chcoord3}
\begin{array}{ll} z_{j,v_2} = \prod_{k=1}^s z_{k,v_1}^{c_{n+j,k}} \sqrt{
\widehat{p}_{n+j} \prod_{k=1}^s \widehat{p_k}^{-c_{n+j,k}} } & {\rm if}\, 1\le j \le s   \\
z_{j,v_2} =  z_{j,v_1} \prod_{k=1}^s z_{k,v_1}^{c_{j,k}}
\sqrt{ \prod_{k=1}^s \widehat{p_k}^{-c_{j,k}} } &
{\rm if} \, s+1\le j \le n. \\
\end{array}
\end{equation}

Take any point $x \in X_{v_1} \bigcap X_{v_2} $. Let
$\widetilde{x}$ be a preimage of $x$ with respect to $\xi_{v_1}$.
Suppose $\pi(x)$ belongs to the relative interior of the face $F
\subset S$. Suppose $F$ is the intersection of facets $F_{i_1},
\ldots, F_{i_t}$ where $ s+1 \le i_1 < \ldots < i_t \le n$. Then
the coordinate $z_{j,v_1}(\tilde{x}) $ is zero if and only if $j
\in \mathcal{I}(F)=\{i_1, \ldots, i_t \}$. Consider the isotropy
subgroup $G_x$ of $\tilde{x}$ in $G_{v_1}$. It consists of all
elements that do not affect the nonzero coordinates of
$\tilde{x}$,
\begin{equation}\label{gx1} G_x = \{ g \in G_{v_1} \,:\, g\cdot z_{j,v_1} =
z_{j,v_1}\, {\rm if}\, j \notin \mathcal{I}(F)  \}
\end{equation}
It is clear that $G_x$ is independent of the choice of $\tilde{x}$
and
\begin{equation}\label{gx2}
G_x =\{ [\eta] \in N/N(v_1)\, :\, \eta = \sum_{j\in \mathcal{I}(F)} a_j
\lambda_j \}.
\end{equation}
Note that $j\in \mathcal{I}(F)$ if and only if $\lambda_j \in N(F)$. It
follows from the linear independence of $\lambda_1, \ldots,
\lambda_n$ that
\begin{equation}\label{gx3}
G_x \cong G_F := ((N(F)\otimes_{\ZZ} \QQ) \cap N) / N(F).
\end{equation}
Note that $G_P$ is the trivial group.

 Choose a small ball $B(\widetilde{x},r)$ around
$\widetilde{x}$ such that $(g\cdot B(\widetilde{x},r)) \bigcap
B(\widetilde{x},r)$ is empty for all $g \in G_{v_1}-G_x$. Then
 $B(\widetilde{x},r)$ is stable under the action of $G_x$ and
  $(B(\widetilde{x},r), G_{x},\xi_{v_1})$ is an orbifold chart around
$x$ induced by  $(\widetilde{X}_{v_1}, G_{v_1}, \xi_{v_1})$. We
show that for sufficiently small value of $r$, this chart embeds
into $(\widetilde{X}_{v_2}, G_{v_2}, \xi_{v_2})$ as well.

Note that the rational numbers $c_{j,k}$ in \eqref{cjk} are
integer multiples of $\frac{1}{\Delta}$ where $\Delta =
\det(\Lambda_{v_2} )$. Choose a branch of
$z_{k,v_1}^{\frac{1}{\Delta}}$ for each $1\le k\le s$, so that
the branch cut does not intersect $ B(\widetilde{x},r)$. Assume
$r$ to be small enough so that the functions
$z_{k,v_1}^{c_{j,k}}$ are one-to-one on $ B(\widetilde{x},r)$
for each $s+1 \le j \le n+s$ and $1\le k\le s$. Then equation
\eqref{chcoord3} defines a smooth embedding $\psi$ of $
B(\widetilde{x},r)$ into $\widetilde{X}_{v_2}$. Note that
$\widehat{p_k}, \, 1\le k \le s,$ and $\widehat{p}_{n+j}, \, 1\le
j \le s$ are smooth non-vanishing functions on
 $\xi_{v_1}^{-1} ( X_{v_1} \bigcap X_{v_2})$.  Let
  $i_{v_2}: G_x \to G_{v_2} $ be the natural inclusion
obtained using equation \eqref{gx3}. Then $(\psi, i_{v_2}):(
B(\widetilde{x},r), G_{x},\xi_{v_1}) \to
 (\widetilde{X}_{v_2}, G_{v_2}, \xi_{v_2}) $ is an
 embedding of orbifold charts.

We denote the space $X$ with the above orbifold structure by ${\bf
X}$. In general we will use a boldface letter to denote an
orbifold and the same letter in normal font to denote the
underlying topological space.

\subsection{Independence of shape of polytope}

 \begin{lemma}\label{class} Suppose ${\bf X}$ and ${\bf Y}$ are  quasitoric
    orbifolds whose orbit spaces
    $P$ and $Q$ are diffeomorphic and the characteristic vector of any edge of $P$
    matches with the characteristic vector of the corresponding edge of $Q$. Then ${\bf X}$
    and ${\bf Y}$ are equivariantly diffeomorphic.
    \end{lemma}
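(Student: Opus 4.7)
The plan is to build the equivariant diffeomorphism directly from the given diffeomorphism of polytopes $\phi: P \to Q$ and then check compatibility with the orbifold atlases constructed in Section \ref{diffs}. Let me assume that the hypothesis on ``edges'' in fact refers to the matching of characteristic vectors on facets under $\phi$ (which is the only natural interpretation, since characteristic data lives on facets). Then $\phi$ induces a face-preserving bijection, and for every face $F \subset P$ one has $N(F) = N(\phi(F))$.

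First I would define $\widetilde{\Phi}: P \times T_N \to Q \times T_N$ by $(p,t) \mapsto (\phi(p), t)$. Because $p$ lies in the relative interior of $F$ if and only if $\phi(p)$ lies in the relative interior of $\phi(F)$, and because $T(N(F)) = T(N(\phi(F)))$, the relation $\sim$ of \eqref{defequi} is preserved. Thus $\widetilde{\Phi}$ descends to a $T_N$-equivariant homeomorphism $\Phi : X \to Y$.

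Next I would promote $\Phi$ to a diffeomorphism of orbifolds. For any vertex $v$ of $P$, set $v' := \phi(v)$ and note $N(v) = N(v')$, so $G_v = G_{v'}$ and $T_{N(v)} = T_{N(v')}$. The map $\widetilde{\Phi}$ lifts over the chart $(\widetilde{X}_v, G_v, \xi_v)$ to a $G_v$-equivariant map $\widetilde{\Phi}_v: \widetilde{X}_v \to \widetilde{Y}_{v'}$ sending $[(p,t)]^{\sim_v}$ to $[(\phi(p), t)]^{\sim_{v'}}$, since the local quotients use the same subgroups. Equivariance with respect to $G_v$ is immediate because $G_v$ acts only on the angular factor.

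The core technical step is smoothness, and this will be the main obstacle. Working in the coordinates of \eqref{homeo}--\eqref{cplxcoor}, let $p_{i,v}$ and $\widehat{p}_{i,v'}$ denote the defining functions of the facets at $v$ and $v'$. Since $\phi$ is a diffeomorphism of manifolds with corners sending $F_i$ to $\phi(F_i)$, the pullback $\phi^* \widehat{p}_{i,v'}$ vanishes to first order precisely on $\{p_{i,v} = 0\}$, so there exist smooth positive functions $h_i(p_{1,v},\ldots,p_{n,v})$ with
\begin{equation}
\phi^* \widehat{p}_{i,v'} = p_{i,v}\, h_i(p_{1,v},\ldots,p_{n,v}).
\end{equation}
Comparing with \eqref{homeo} and \eqref{cplxcoor}, and noting that $\widetilde{\Phi}_v$ preserves the angular coordinates $q_{i,v}$ (by $T_N$-equivariance together with $\Lambda_v = \Lambda_{v'}$ via \eqref{chcoord1}), I obtain the formula
\begin{equation}
w_{i,v'} = z_{i,v}\,\sqrt{h_i(|z_{1,v}|^2,\ldots,|z_{n,v}|^2)}.
\end{equation}
Since $h_i > 0$ is smooth in the $p_{j,v} = |z_{j,v}|^2$, the square root is smooth, and hence $\widetilde{\Phi}_v$ is smooth. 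Applying the same argument to $\widetilde{\Phi}_v^{-1}$ (built from $\phi^{-1}$) shows it is a diffeomorphism. Finally, I would note that the embeddings of induced charts around non-vertex points constructed in Section \ref{comp} are obtained by restriction, so $\widetilde{\Phi}_v$ restricts compatibly and gives a well-defined diffeomorphism of orbifolds $\mathbf{\Phi}: \mathbf{X} \to \mathbf{Y}$.
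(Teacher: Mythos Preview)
Your proof is correct and follows essentially the same approach as the paper: build the map on charts from the polytope diffeomorphism, keep the angular coordinates fixed, and reduce smoothness to showing that the ratios $h_i = \phi^{\ast}\widehat{p}_{i,v'}/p_{i,v}$ are smooth and strictly positive so that $\sqrt{h_i}$ is smooth. The paper spells out precisely the step you compress into ``vanishes to first order precisely on $\{p_{i,v}=0\}$'': it uses a Taylor expansion to get smoothness of $h_i$, and then the nonsingularity of the Jacobian of $\phi$ (together with $\partial_{p_j}(\phi^{\ast}\widehat{p}_{i,v'})|_{p_{i,v}=0}=0$ for $j\neq i$) to conclude $h_i\neq 0$ along $p_{i,v}=0$; you may want to make that Jacobian argument explicit.
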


 \begin{proof} Pick any vertex $v$ of $P$.  For simplicity we will write $p_i$ for $p_{i,v}$, and
$q_i$ for $q_{i,v}$. Suppose the diffeomorphism $f : P_1 \to
P_2$ is given near $v$ by $f(p_1,p_2 \ldots ,p_n)= (f_1,f_2 \ldots
,f_n )$. It induces a map of
 local charts  $\widetilde{X}_v \to \widetilde{Y}_{f(v)} $ by
\begin{equation}\label{map}
(\sqrt{p_i} \cos(2\pi q_i), \sqrt{ p_i} \sin(2\pi q_i) ) \mapsto
(\sqrt{f_i} \cos(2\pi q_i), \sqrt{f_i} \sin(2\pi q_i))  \quad {\rm
for}\; i= 1,\ldots,n.
\end{equation}
This is a smooth map if the functions $\sqrt{f_i/p_i}$ are smooth
functions of $p_1, \ldots, p_n$. Without loss of generality let us
consider the case of $\sqrt{f_1/p_1}$. We may write
\begin{equation}\label{taylor}
f_1(p_1,p_2 \ldots p_n) = f_1(0,p_2 \ldots p_n) +  p_1
\frac{\partial f_1}{\partial p_1} (0,p_2 \ldots p_n) + p_1^2
g(p_1,p_2 \ldots p_n)
\end{equation}
where $g$ is smooth, see section 8.14 of \cite{[Dieu]}. Note that
$f_1(0,p_2\ldots p_n)=0$ as $f$ maps the facet $p_1=0$ to the
facet $f_1 =0$. Then it follows from equation \eqref{taylor} that
$f_1/p_1$ is smooth. We have
\begin{equation}\label{taylor2}
\frac{f_1}{p_1} =  \frac{\partial f_1}{\partial p_1} (0,p_2,\ldots
p_n) + p_1 g(p_1,p_2,\ldots p_n)
\end{equation}
Note that $\frac{f_1}{p_1}$ is nonvanishing away from $p_1=0$.
Moreover we have
\begin{equation}
\frac{f_1}{p_1}=  \frac{\partial f_1}{\partial p_1} (0,p_2\ldots
p_n) \; \; {\rm when}\; p_1 =0.
\end{equation}
Since $f_1(0,p_2 \ldots p_n) $ is identically zero, $
\frac{\partial f_1}{\partial p_j} (0,p_2, \ldots, p_n) =0$ for
each $ 2\le j\le n$. As the Jacobian of $f$ is nonsingular we must
have
\begin{equation}
\frac{\partial f_1}{\partial p_1} (0,p_2, \ldots, p_n) \neq 0
\end{equation}
Thus  $\frac{f_1}{p_1}$ is nonvanishing even when $p_1=0$.
Consequently $\sqrt{f_1/p_1}$ is smooth. Therefore the map
\eqref{map} is smooth and induces an  isomorphism of orbifold
charts.
\end{proof}

\subsection{Torus action} An action of a group $H$ on an orbifold ${\bf Y}$ is an
    action of $H$ on the underlying space $Y$ with some extra conditions. In particular
    for every sufficiently small $H$-stable neighborhood $U$ in $Y$ with uniformizing
    system $(W,G,\pi)$, the action should lift to an action of $H$ on $W$ that commutes
    with the action of $G$. The $T_N$-action on the underlying topological space of a
    quasitoric orbifold does not lift to an action on the orbifold in general.

\subsection{Metric}
    By a torus invariant metric on ${\bf X}$ we will mean a metric on ${\bf X}$ which is
    $T_{N(F)}$-invariant in some uniformizing neighborhood of $x$ for any point
    $x \in \pi^{-1}(F^{\circ})$.

    Any cover of $X$ by $T_N$-stable open sets induces an
    open cover of $P$. Choose a smooth partition of unity on the polytope $P$ subordinate
    to this induced cover.
    Composing with  the projection map $\pi: X \to P$ we obtain a partition of
    unity on $X$ subordinate to the given cover, which is $T_N$-invariant. Such a
    partition of unity is smooth as the map
    $\pi$ is smooth, being locally given by maps $p_j = x_j^2 + y_j^2$.
     For instance, choose a $T_{N(v)}$-invariant metric on each $\widetilde{X}_v$. Then
      using a partition of unity
     as above we can define an invariant metric on ${\bf X}$.

\subsection{Invariant suborbifolds} The $T_N$-invariant subset $X(F) = \pi^{-1}(F)$, where
    $F$ is a face  of $P$, has a natural structure of a quasitoric orbifold \cite{[PS]}.
     This structure is
    obtained by taking $F$ as the polytope for ${\bf X}(F)$ and projecting the characteristic vectors
     to $N/N^{\ast}(F)$ where $N^{\ast}(F)= (N(F) \otimes_{\ZZ} \QQ) \cap N$. With this structure
    ${\bf X}(F)$ is a suborbifold of ${\bf X}$.
    It is called a characteristic suborbifold if $F$ is a facet.
    Suppose $\lambda$ is the characteristic vector attached to the facet $F$. Then
     $\pi^{-1}(F)$ is fixed by the circle subgroup $T(\lambda)$ of $T_N$.
      We denote the relative interior
      of a face $F$ by $F^{\circ}$ and the corresponding invariant space $\pi^{-1}(F^{\circ})$ by $X(F^{\circ})$.
      Note that $v^{\circ}= v$ if $v$ is a vertex.

\subsection{Orientation}
       Note that for any vertex $v$,
      $dp_{i,v} \wedge dq_{i,v} = dx_{i,v} \wedge dy_{i,v}$. Therefore $\omega_{v}:=
      dp_{1,v}\wedge \ldots \wedge dp_{n,v} \wedge
      dq_{1,v} \wedge \ldots \wedge dq_{n,v}$ equals $ dx_{1,v} \wedge \ldots \wedge dx_{n,v} \wedge
      dy_{1,v}
       \wedge \ldots \wedge dy_{n,v} $.
       The standard coordinates $(p_1, \ldots,p_n)$ are related to $(p_{1,v}, \ldots,p_{n,v)}$
      by a diffeomorphism. The same holds  for ${\bf q}$ and ${\bf q}_{v}$. Therefore
      $\omega:= dp_1 \wedge \ldots\wedge dp_n \wedge dq_1 \wedge \ldots \wedge dq_n$ is a nonzero multiple of each
      $\omega_{v}$. The action of $G_v$ on $\widetilde{X}_v$, see equation \eqref{action},
        preserves $\omega_{v}$ for each
      vertex $v$ as $dx_{i,v} \wedge dy_{i,v}= \frac{\sqrt{-1}}{2} dz_{i,v}\wedge d{\bar z}_{i,v}$.
      The action of $G_v$ affects only the angular coordinates. Since
      $dq_1 \wedge \ldots \wedge dq_n = \det(\Lambda_{v}) dq_{1,v} \wedge \ldots \wedge dq_{n,v} $
      and the right hand side is $G_v$-invariant, we conclude that $\omega$ is $G_v$-invariant.
      Therefore $\omega$ defines a nonvanishing $2n$-form on ${\bf X}$.
      Consequently a choice of orientations for $P \subset \RR^n$
      and $T_N$ induces an orientation for ${\bf X}$.

\subsection{Omniorientation}\label{omnio} An omniorientation is a choice of orientation for the
  orbifold as well as an orientation for each characteristic suborbifold.
  For any vertex $v$, there is a representation of $G_v$ on the tangent space${\mathcal T}_0 \widetilde{X}_v$.
  This representation splits into the direct
  sum of $ n$ representations corresponding to the normal spaces of $z_{i,v}=0$.
  Thus we have a decomposition of the orbifold tangent space
  ${\mathcal T}_v{\bf X}$ as a direct sum of the normal spaces of the
  characteristic suborbifolds that meet at $v$.
   Given an omniorientation, we say that the
  sign of a vertex $v$ is positive if the orientations of
   ${\mathcal T}_v({\bf X})$ determined by the
  orientation of ${\bf X}$ and orientations of characteristic suborbifolds coincide. Otherwise
  we say that sign of $v$ is negative. An omniorientation is then said to be positive
   if each vertex has positive sign.

 It is easy to verify that reversing the sign of any number of characteristic vectors does not
 affect the topology or differentiable structure of the quasitoric orbifold. There is a circle
 action of $T_{\lambda_i}$ on the normal bundle of ${\bf X}(F_i)$ producing a complex structure and
 orientation on it. This action and orientation varies with the sign of $\lambda_i$. Therefore,
 given an orientation on ${\bf X}$, omniorientations correspond bijectively to choices of signs
 for the characteristic vectors. We will assume the standard orientations on $P$ and $T^n$ so that
 omniorientations will be solely determined by signs of characteristic vectors.

 At any vertex $v$, we may order the incident facets in such a way that their
 inward normal vectors form a positively oriented basis of $\RR^n \supset P$.
 Facets at a vertex ordered in this way will be called positively ordered.
 We denote the matrix of characteristic vectors ordered accordingly by $\Lambda_{(v)}$. Then
  the sign of $v$ equals the sign of $\det(\Lambda_{(v)})$.

\section{Almost complex structure}\label{acs}

Let ${\bf X}$ be a positively omnioriented primitive quasitoric
orbifold.

\begin{defn} We say that an almost complex structure on ${\bf X}$
  torus invariant if it is $T_{N(F)}$-invariant in some uniformizing neighborhood
   of each point $x \in X(F^{\circ}) $.
 \end{defn}

\begin{theorem}\label{thmacs}
Let ${\bf X}$ be a positively omnioriented quasitoric orbifold and
$\mu$ an invariant metric on it. Then there exists an orthogonal
invariant almost complex structure on ${\bf X}$ that respects the
omniorientation.
\end{theorem}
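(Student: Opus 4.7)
The plan is to adapt Kustarev's construction for quasitoric manifolds to the orbifold setting, by producing a torus-invariant almost complex structure on each vertex chart from the natural complex coordinates there and then gluing these local structures via an averaging procedure in the contractible fiber of compatible orthogonal almost complex structures.

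First I would work locally. For each vertex $v$ of $P$, the chart $(\widetilde{X}_v, G_v, \xi_v)$ of section \ref{diffs} identifies $\widetilde{X}_v$ with an open subset of $\CC^n$ through the coordinates $z_{1,v},\ldots,z_{n,v}$; let $J_v^0$ denote the standard complex structure in these coordinates. By \eqref{action}, the $G_v$-action on $\widetilde{X}_v$ is diagonal multiplication by roots of unity, hence $\CC$-linear, and so preserves $J_v^0$; the lifted $T_{N(v)}$-action likewise preserves $J_v^0$. Thus $J_v^0$ descends to a torus-invariant almost complex structure on the orbifold $X_v$. The positive omniorientation hypothesis says $\det(\Lambda_{(v)}) > 0$ for every vertex $v$, which is exactly what is needed for the orientation induced by $J_v^0$ to match the given orientation of ${\bf X}$ on $X_v$. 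I would then polar-decompose $J_v^0$ pointwise with respect to the invariant metric $\mu$ to obtain a $\mu$-orthogonal almost complex structure $J_v$ on $X_v$; this standard construction yields $J_v^2 = -\mathrm{Id}$, inducing the same orientation as $J_v^0$, and remains torus invariant because both $J_v^0$ and $\mu$ are.

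Second, I would glue. Choose a $T_N$-invariant smooth partition of unity $\{\rho_v\}$ on $X$ subordinate to the cover $\{X_v\}$, obtained by pulling back via $\pi$ a smooth partition of unity on $P$ subordinate to $\{U_v\}$, as described in the paragraph on metrics above. At each $x \in X$, the set $\mathcal{J}(T_xX,\mu_x)$ of $\mu_x$-orthogonal almost complex structures on $T_xX$ compatible with the orientation is a contractible homogeneous space. Kustarev's averaging procedure exploits this contractibility to form a weighted barycenter of finitely many such structures; applying it pointwise to the $\{J_v(x)\}$ with weights $\{\rho_v(x)\}$ produces a global $\mu$-orthogonal, orientation-compatible, torus-invariant almost complex structure $J$ on ${\bf X}$.

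The main obstacle is the gluing step. Kustarev's manifold argument must be adapted so that the averaged $J$ is genuinely smooth across overlapping orbifold charts, via the transition formulas \eqref{chcoord3}, and so that $J$ satisfies the refined invariance condition of being $T_{N(F)}$-invariant in a uniformizing neighborhood of each point of $X(F^{\circ})$; this requires checking compatibility with the isotropy subgroups $G_x \cong G_F$ of \eqref{gx3}. These technical issues are what necessitate the stronger extension statements (the promised Theorem \ref{extn} and Corollary \ref{extc}) that the authors announce in the introduction as a refinement of Kustarev's result.
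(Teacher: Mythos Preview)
Your proposal contains a fundamental error in the gluing step. The space $\mathcal{J}(T_xX,\mu_x)\cong SO(2n)/U(n)$ of $\mu$-orthogonal orientation-compatible almost complex structures is \emph{not} contractible for $n\ge 2$ (for instance $SO(4)/U(2)\cong S^2$, $SO(6)/U(3)\cong\CP^3$). Hence there is no canonical barycenter of a finite family of such structures, and a partition-of-unity combination of the $J_v$ will not in general satisfy $J^2=-\mathrm{Id}$. This is precisely why Kustarev's argument is not an averaging procedure at all: it is an obstruction-theoretic construction, and the contractibility being exploited is that of the \emph{base} (the polytope), not of the fiber.

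The paper proceeds quite differently. Instead of gluing local structures with a partition of unity, one first restricts to a cross-section $R\cong P$ of the torus action obtained by gluing the sets $R_v=\{z_{j,v}\in\RR_{\ge 0}\}\subset\widetilde X_v$. Over $R$ the tangent bundles glue to a rank-$2n$ bundle $\mathcal{E}$, and one builds an orthogonal $J$ on $\mathcal{E}$ skeleton by skeleton as a map $R\to SO(2n)/U(n)$. Positive omniorientation enters specifically at the extension from the $0$-skeleton to the $1$-skeleton; the higher obstructions lie in $C^i(R,\pi_{i-1}(SO(2i)/U(i)))$ and are cocycles, hence coboundaries by contractibility of $R$, so they can be killed by perturbing $J$ on lower cells. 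Crucially, $J$ is constructed subject to the constraint ($\star$) that on the normal sub-bundle $\mathcal{N}_F$ over each $R(F^\circ)$ it agrees with the structure induced by the $T_{N(F)}$-action and the omniorientation. Only after this is $J$ spread out from $R$ to all of $\widetilde X_v$ via $J(t\cdot x)=dt\circ J(x)\circ dt^{-1}$; condition ($\star$) is exactly what makes this extension well-defined at points with nontrivial isotropy and what yields compatibility across orbifold charts.

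So the gap is not merely technical: without the obstruction-theoretic construction on $R$ and the condition ($\star$), there is no mechanism to produce a global $J$, and the partition-of-unity route cannot be repaired. Note also that Theorem~\ref{extn} and Corollary~\ref{extc} do not serve the purpose you assign them; they are refinements showing one can extend a \emph{prescribed} structure from a characteristic suborbifold, not devices to rescue a gluing argument.
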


\begin{proof}
Consider the subset $R_v \subset \widetilde{X}_v $ consisting of
points whose coordinates \eqref{cplxcoor} are real and
nonnegative,
\begin{equation}\label{rv}
R_v = \{ x\in \widetilde{X}_v: z_{j,v}(x) \in \RR_{\ge} \,
\forall 1\le j \le n \}
\end{equation}
In other words,
\begin{equation}
R_v = \{ x\in \widetilde{X}_v: z_{j,v}(x) = \sqrt{p_{j,v}(x)},
\; j= 1, \ldots, n \}
\end{equation}
We glue the spaces $R_v$ according to the transition maps
 \eqref{chcoord3}, choosing
the branches  uniformly as $-\pi < q_{k,v} < \pi$. We obtain a
manifold with boundary $R$.

 Let $x$ be any point in $R_{v_1}$ such that $\xi_{v_1}(x)\in X_{v_1} \cap
 X_{v_2} $.  Then the transition maps
\eqref{chcoord3}, with above choice of cuts, define a local
diffeomorphism $\phi_{12}$ from a neighborhood of $x$ in
$\widetilde{X}_{v_1}$ to a neighborhood of the image of $x$ in
$\widetilde{X}_{v_2}$.

 Let $\mathcal{E}_v$ denote the restriction of
$\mathcal{T}\widetilde{X}_v$ to $R_v$. The last paragraph shows
that these bundles glue to form a smooth rank $2n$ real vector
bundle $\mathcal{E}$ on $R$. The metric $\mu$ on $\mathcal{T}{\bf
X}$ induces a metric on the bundle $\mathcal{E}$.

 The restriction of the quotient map $\xi_v|_{R_v}: R_v \to X_v $  is
a homeomorphism onto its image. As a result the space $R $ is
homeomorphic to the subspace $\iota(P)$ of $X$ used by Kustarev
\cite{[Kus]}. The map  $\iota: P \to X$ is a homeomorphism given
by the composition
   $P \stackrel{i}{\rightarrow} P\times T_N \stackrel{j}{\rightarrow} X $
    where $i$ is the inclusion
   given by $i(p_1, \ldots, p_n) = (p_1, \ldots, p_n, 1,\ldots,1)$
    and $j$ is the quotient map that
   defines $X$.
 For any face $F$ of $P$ we denote its image in $R$
   under the composition of above homeomorphisms as $R(F)$. The
   restriction of this homeomorphism to the relative interior of
   $F$ is smooth, and we denote the image by $R(F^{\circ})$.

   Let  $\widetilde{X}_{v}(F) $ be the preimage of  $X(F)$ in $\tilde{X}_v$.
   If $F$ is the intersection of facets $F_{i_1},\ldots, F_{i_t}$,
   then  $\widetilde{X}_{v}(F)  $ is the submanifold of
   $\widetilde{X}_v$ defined by the equations $z_{{i_j},v} =0$, $ 1\le j\le
   t$. Then arguments similar to the case of $\mathcal{E}$ show
   that the restrictions $\mathcal{T}\widetilde{X}_{v}(F)|_{R_v \cap R(F)}$
   glue together to produce a subbundle $\mathcal{E}_F$ of
   $\mathcal{E}|_{R(F)}$.

 It  is easy to check from \eqref{chcoord3} that
 \begin{equation}\frac{\partial}{\partial z_{{i_j},v_1}} \left|_{x}  =
 \frac{\partial}{\partial z_{{i_j},v_2}} \right|_x \end{equation} at any
 point $x$ in $R_{v_1}\cap R_{v_2} \cap R(F)$. Therefore we obtain a
 subbundle $\mathcal{N}_F$ of $\mathcal{E} |_{R(F)}$ corresponding
 to the normal bundles of $\widetilde{X}_{F,v} $ in
 $\widetilde{X}_v$. The bundle $\mathcal{N}_F$ obviously splits
 into the direct sum of the rank $2$ bundles $\mathcal{N}_{F_{k}} $
 where $k \in \mathcal{I}(F):=\{i_1, \ldots, i_t\} $.

Recall the torus $T_{N(F)}$ corresponding to the face $F$ of $P$
from equation \eqref{TM} and Definition \ref{TM}. For any vertex
$v$ of $F$, the module $N(F)$ is a direct summand of the module
$N(v)$. Consequently, $T_{N(F)}$ injects into $T_{N(v)}$. Suppose
$x$ is a point in $R(F^{\circ})$.  Then $T_{N(F)}$ is the
stabilizer of any preimage of $x$ in $\widetilde{X}_v$.

$T_{N(F)}$ is the product of the circles $T_{\lambda_{k}}$, $k\in
\mathcal{I}(F)$.    The circle $T_{\lambda_{k}}$
acts nontrivially on $\mathcal{N}_{F_k}$ and induces an almost
complex structure on it corresponding to rotation by
$\frac{\pi}{2}$. Note that this structure depends on the sign of
$\lambda_{k}$ or, in other words, the specific omniorientation.
Thus the $T_{N(F)}$ action induces an almost complex structure on
$\mathcal{N}_{F}$.

Using the method of Kustarev \cite{[Kus]} it is possible to
construct an orthogonal almost complex structure $J$ on
$\mathcal{E}$ that satisfies the following condition:
  ($\star$) For any face $F$  of $ P$  of dimension less than
 $n$,   the restriction of $J$ to $ \mathcal{N}_{F}|_{R(F^{\circ})}$
  agrees with the complex structure induced by the $ T_{N(F)}$  action and
 the omniorientation.

 For future use, we give a brief outline of the proof of existence
of such a structure. The details may be found in \cite{[Kus]}. In
our case, the bundles $\mathcal{E}_F$ and $\mathcal{N}_{F_k}$ play
the roles of the bundles $\tau(M_F)$ and $\xi_k$ in \cite{[Kus]}.

 An orthogonal almost complex structure on $\mathcal{E}$ may  be
regarded as a map $J: R \to SO(2n)/U(n)$. We proceed by induction.
Let $sk_i(R)$ denote the union of all $i$-dimensional faces of
$R$. For $i=0$, existence of $J$ is trivial. Extension to
$sk_1(R)$ is possible due to positivity of omniorientation. For
$i\ge 2$, suppose $J$ is a structure on $sk_{i-1}(R)$ satisfying
the condition ($\star$). Then $J$ may be regarded as a map from
$sk_{i-1}(R)$ to $SO(2i-2)/ U(i-1)$ as it is fixed in the normal
directions by the torus action. Construct a cellular cochain
$\sigma^i_J \in C^i(R, \pi_{i-1}(SO(2i )/U(i))$ by defining the
value of $\sigma^i_J $ on an $i$-dimensional face of $R$ to be the
homotopy class of the value of J on the boundary of the face,
composed with a canonical isomorphism between $\pi_{i-1}(SO(2i-2
)/U(i-1))$ and $\pi_{i-1}(SO(2i )/U(i)) $. $J$ extends to
$sk_i(R)$ if and only if $\sigma^i_J = 0$. Following \cite{[Kus]},
one proves that $\sigma^i_J $ is a cocycle. Therefore, by
contractibility of $R$ it is a coboundary. Suppose $\sigma^i_J =
\delta \beta$, where $\beta \in C^{i-1}(R, \pi_{i-1}(SO(2i )/U(i))
$. Note that $\delta \beta(Q) = \pm \sum_{G \subset
\partial Q} \beta(G)$. For each $H \in sk_{i-1}(R)$, one perturbs
$J$ in the interior of $H$ by a factor of $-\beta(H)$. This makes
$\sigma^i_J= 0$. (Note that if $\beta(H)=0$, no change is required
for face $H$. This will be used crucially in Lemma \ref{extn}.)

By $(\star)$ the structure $J$ on $\mathcal{E}_v$ is
invariant under the action of isotropy groups. We can therefore
use the action of $T_{N(v)}$ to produce an invariant almost
complex structure on $\mathcal{T}\widetilde{X}_v$ as follows,
\begin{equation}
J(t\cdot x) = dt \circ J(x) \circ dt^{-1} \; \forall x \in R_v, \;
{\rm and} \; \forall t\in T_{N(v)}
\end{equation}
 The local group $G_v$ of orbifold chart $(\widetilde{X}_v, G_v, \xi_v)$
 is a subgroup of $T_{N(v)}$. Thus $J$ is $G_v$-invariant on
 $\widetilde{X}_v$.

  The compatibility of $J$ across charts may be
 verified as follows.
  Take any point $x \in X_{v_1} \cap X_{v_2}$. Let
  $\widetilde{x} \in \widetilde{X}_{v_1}$ be a preimage of $x$
  under $\xi_{v_1}$. Suppose $\widetilde{x}= t_1\cdot x_0$ where
  $x_0 \in R$ and $t_1 \in T_{N(v_1)}$. Choose an embedding
  $\widetilde{\phi}_{12}$ of a small
  $G_x$-stable neighborhood of $\widetilde{x}$ into
  $\widetilde{X}_{v_2}$ as outlined in section \ref{comp}. Suppose
   $\widetilde{\phi}_{12}({\widetilde{x}}) = t_2 \cdot
  x_0$ where $t_2 \in T_{N(v_2)}$.
 Then
\begin{equation}
\widetilde{\phi}_{12} = t_2 \circ \phi_{12} \circ {t_1}^{-1}
\end{equation}
By construction of $J$ on $\mathcal{E}$, $J$ commutes with
$d\phi_{12}|_R$. $J$ commutes with $dt_i$ and $dt_i^{-1} $ by its
construction on $\widetilde{X}_{v_i}$. Therefore $J$ commutes with
$d\widetilde{\phi}_{12}$, as desired.
\end{proof}

\begin{theorem}\label{extn} Suppose an orthogonal invariant
  almost complex structure is given on a characteristic suborbifold
 ${\bf X}(F)$. Then it can be extended to ${\bf X}$.
 \end{theorem}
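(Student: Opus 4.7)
The plan is to rerun the skeletal induction of Theorem \ref{thmacs} relative to the sub-face $R(F) \subset R$. First I would unpack the given data: the orthogonal invariant almost complex structure on ${\bf X}(F)$ descends, via the orbifold charts $\widetilde{X}_v(F)$, to a complex structure on the subbundle $\mathcal{E}_F \subset \mathcal{E}|_{R(F)}$, while on the complementary rank-two bundle $\mathcal{N}_F$ the complex structure is already pinned down by the $T_{\lambda}$-action together with the omniorientation, where $\lambda$ is the characteristic vector of $F$. Torus invariance of the given structure on ${\bf X}(F)$ forces the compatibility condition $(\star)$ to hold automatically on every sub-face of $F$. Once $J$ has been constructed on all of $\mathcal{E}$ over $R$, the torus-translation argument from the proof of Theorem \ref{thmacs} propagates $J$ to each $\widetilde{X}_v$ and verifies chart compatibility verbatim.

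I would then extend $J$ to $R$ by induction on the skeleta $sk_i(R)$, with the structure on $R(F)$ held fixed at every stage. At each vertex not in $R(F)$, the splitting of the orbifold tangent space into the normal bundles of the characteristic suborbifolds meeting there, combined with positivity of the omniorientation, determines $J$ canonically; extension across $1$-cells outside $R(F)$ then uses positivity exactly as in \cite{[Kus]}. Assuming $J$ has been built on $R(F) \cup sk_{i-1}(R)$ compatibly with $(\star)$, Kustarev's obstruction cochain $\sigma^i_J \in C^i(R, \pi_{i-1}(SO(2i)/U(i)))$ automatically vanishes on every $i$-cell contained in $R(F)$ because $J$ is already defined there; thus $\sigma^i_J$ is in fact a relative cocycle in $C^i(R, R(F); \pi_{i-1}(SO(2i)/U(i)))$.

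The main obstacle is to realize $\sigma^i_J$ as a relative coboundary, i.e.\ to write $\sigma^i_J = \delta \beta$ with $\beta(H) = 0$ for every cell $H \subset R(F)$. Since both $R$ and $R(F)$ are contractible (one is a polytope, the other a facet of it), the long exact sequence of the pair gives $H^i(R, R(F); A) = 0$ for every abelian coefficient group $A$ and every $i \ge 1$, so such a $\beta$ exists. Appealing to the parenthetical remark in the proof of Theorem \ref{thmacs}---namely that no perturbation is required on a face where $\beta$ vanishes---the correction $-\beta$ alters $J$ only on $(i-1)$-cells disjoint from $R(F)$, and hence produces an extension of $J$ to $sk_i(R) \cup R(F)$ without disturbing its given values on $R(F)$. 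Iterating up to $i=n$ completes the construction of $J$ on $R$, and the torus-translation step mentioned above finishes the proof.
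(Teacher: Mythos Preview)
Your argument is correct and follows the same relative obstruction-theoretic strategy as the paper's proof. The only difference is packaging: you invoke $H^i(R,R(F);A)=0$ directly from the long exact sequence of the pair to produce a primitive $\beta$ vanishing on $R(F)$, whereas the paper constructs such a $\beta$ by hand. Concretely, for $d<n$ the paper starts from any $\beta$ with $\delta\beta=\sigma^d$ (using contractibility of $R$), observes that $i^{\ast}\beta$ is a cocycle on $R(F)$, writes $i^{\ast}\beta=\delta\beta_1$ (using contractibility of $R(F)$), extends $\beta_1$ by zero to $\beta_2\in C^{d-2}(R)$, and replaces $\beta$ by $\beta_3=\beta-\delta\beta_2$; for $d=n$ it uses a separate ad hoc trick, pushing the value $\beta(R(F))$ onto an adjacent facet. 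Your relative-cohomology formulation absorbs both cases uniformly and is somewhat cleaner; the paper's explicit manipulations have the minor advantage of making the cochain modifications visible.
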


\begin{proof} We follow the notation of the previous theorem.
  $J$ has been already specified on ${\bf X}(F)$ where
$\dim(F) = n-1$. This determines $J$ on the subbundle
$\mathcal{E}_F$ of $\mathcal{E}$ over $R(F)$. We use the torus
action and omniorientation to extend $J$ to $\mathcal{E}|_{R(F)}$.

 We construct an extension of $J$ to $R$ skeleton-wise. Extension
 up to $sk_1(R)\cup F$ is achieved using positivity of
 omniorientation. For extension to higher skeletons we need to use
 obstruction theory.  We need to take care so that $J$ is preserved on
 sub-faces of $F$. We use induction. Suppose $J$ has been extended
 to $sk_{d-1}(R) \cup F$, where $d < n $. (We will deal with the $d=n$
 case separately.)

 Let $\sigma^d \in
C^d(R, \pi_{d-1} (SO(2d)/U(d)) )$ be the obstruction
 cocycle.
   Let $i: R(F) \hookrightarrow R$ be inclusion map.
 Restriction to $F$ produces a cochain $$ i^{*}(\sigma^d) \in C^d(R(F),
 \pi_{d-1} (SO(2d)/U(d)) ).$$
 Then $ i^{*}(\sigma^d)=0$ since we know that $J$ extends to $R(F)$.
   Since $\sigma^d = \delta \beta $,
  $ i^{*}(\beta)$ is a cocycle.  As $R(F)$ is contractible $ i^{*}(\beta)$
 is a coboundary. Let $ i^{*}(\beta)=\delta \beta_1$ where $\beta_1 \in  C^{d-2}(R(F))$.
  Define a chain $\beta_2 \in C^{d-2}(R)$  such that
\begin{equation}
   \beta_2(H) = \left\{ \begin{array}{ll}
\beta_1(H) & {\rm for\, any\,}  (d-2)
 \, {\rm face} \, H \subset R(F)\\  0 & {\rm otherwise}
 \end{array} \right.
\end{equation}

  Then define
 $\beta_3= \beta - \delta (\beta_2)$. This new cochain has the property
 that  $ \delta(\beta_3) = \sigma^d$ and its
 action $(d-1)$-dimensional faces of $R(F)$ is zero. So we can now
 extend the structure to $sk_d \cup R(F)$ without affecting the sub-faces of
 $R(F)$.

 By induction, we may assume that $J$ has been extended to $sk_{n-1}(R) \cup R(F)$.
 Let $\sigma^n \in C^n(R, \pi_{n-1} (SO(2n)/U(n) )$  be the corresponding obstruction
  cochain for extension to $sk_n$. Since $R$ is contractible we
  have $\sigma^n= \delta \beta$. We modify $\beta $ as follows.
  Suppose $K$ is a facet adjacent to $F$. Define $\beta^{\prime}
  \in C^{n-1}$ as follows.
  \begin{equation} \beta^{\prime}(H)= \left\{ \begin{array}{ll}
  0 & {\rm if\,} H=R(F)\\
  \beta(R(F)) + \beta(R(K)) & {\rm if\,} H=R(K)\\
  \beta(H) & {\rm otherwise} \end{array} \right.
  \end{equation}
  Then $\delta \beta^{\prime} =\delta \beta= \sigma^n$ and $\beta^{\prime}(R(F))=0$.
  So we may extend $J$ to $R$ without changing it on $R(F)$.
\end{proof}

\begin{corollary}\label{extc} Suppose an orthogonal invariant
  almost complex structure is given on a suborbifold
 ${\bf X}(F)$ where $F$ is any face of $P$.
 Then it can be extended to ${\bf X}$.
\end{corollary}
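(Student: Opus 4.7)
The plan is to iterate Theorem \ref{extn} along a descending chain of faces from $P$ down to $F$. Since $P$ is a simple polytope, any codimension-$c$ face $F$ admits a flag
\[ F = F^{(c)} \subsetneq F^{(c-1)} \subsetneq \cdots \subsetneq F^{(1)} \subsetneq F^{(0)} = P, \]
in which $F^{(i)}$ has codimension $i$ in $P$ and $F^{(i+1)}$ appears as a facet of the simple polytope $F^{(i)}$. The proof will extend the given almost complex structure one codimension at a time: from ${\bf X}(F^{(c)}) = {\bf X}(F)$ up to ${\bf X}(F^{(c-1)})$, then to ${\bf X}(F^{(c-2)})$, and so on, finishing at ${\bf X}(F^{(0)}) = {\bf X}$.

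To justify each step, recall from the Invariant suborbifolds subsection that $F^{(i)}$ is the orbit polytope of the primitive quasitoric orbifold ${\bf X}(F^{(i)})$, with characteristic vectors obtained by projecting the relevant $\lambda_j$ to $N/N^{\ast}(F^{(i)})$. In particular, ${\bf X}(F^{(i+1)})$ sits inside ${\bf X}(F^{(i)})$ as a characteristic suborbifold. Positivity of omniorientation for ${\bf X}$ passes to positivity of omniorientation on each ${\bf X}(F^{(i)})$, and the ambient invariant metric $\mu$ restricts to an invariant metric on each ${\bf X}(F^{(i)})$. Hence at step $i$ the hypotheses of Theorem \ref{extn} are satisfied, and that theorem extends the orthogonal invariant almost complex structure from ${\bf X}(F^{(i)})$ to ${\bf X}(F^{(i-1)})$.

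The main obstacle is verifying that Theorem \ref{extn} truly applies verbatim at each intermediate stage. This reduces to checking that the projection-based construction of ${\bf X}(F^{(i)})$ yields a primitive positively omnioriented quasitoric orbifold in its own right, that $\mu$ restricts to an invariant metric on it, and that the almost complex structure transported to ${\bf X}(F^{(i)})$ by the previous induction step is orthogonal and invariant with respect to these restricted structures. Once this bookkeeping is in place, the iteration runs without additional obstruction theory and the corollary follows.
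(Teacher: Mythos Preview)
Your approach is correct and is essentially the same as the paper's: both arguments choose a flag of faces $F = H_0 \subsetneq H_1 \subsetneq \cdots \subsetneq H_k = P$ with $\dim H_i = \dim F + i$, and extend the structure one step at a time by applying Theorem~\ref{extn} to the pair $({\bf X}(H_{i+1}), {\bf X}(H_i))$. The paper states this in two sentences and leaves implicit the verification that each ${\bf X}(H_i)$ is itself a positively omnioriented primitive quasitoric orbifold with an induced invariant metric; you make this checklist explicit, which is a reasonable elaboration rather than a different argument.
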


\begin{proof} Consider a nested sequence of faces $F=H_0 \subset
H_1 \ldots \subset H_k= P$ where $\dim(H_i)= \dim(F) +i$. Extend
the structure inductively from ${\bf X}(H_i)$ to ${\bf
X}(H_{i+1})$ using Theorem \ref{extn}.
\end{proof}

\section{Blowdowns}\label{blowdown}

 Topologically the blowup will correspond to replacing an
 invariant suborbifold by the projectivization of its normal bundle. Combinatorially
 we replace a face by a facet with a new characteristic vector. Suppose $F$ is a face of $P$. We
 choose  a hyperplane $H = \{ \widehat{p}_0 = 0 \}$ such that $\widehat{p}_0$
 is negative on $F$ and $\widehat{P}:=\{\widehat{p}_0 > 0\} \cap P$ is a simple
 polytope having one more facet than $P$. Suppose $F_1, \ldots, F_m$ are the
 facets of $P$. Denote the facets $F_i \cap \widehat{P} $ by $F_i$
 without confusion. Denote the extra facet $H \cap P$ by
 $F_{0}$.

Without loss of generality let $F = \bigcap_{j= 1}^k F_j$.
  Suppose there exists a primitive vector
  $\lambda_{0} \in N$ such that
  \begin{equation}
 \lambda_{0} = \sum_{j= 1}^k b_j \lambda_j, \; b_j > 0 \, \forall
 \, j.
  \end{equation}
 Then the assignment $F_{0} \mapsto \lambda_{0}$
 extends the characteristic function of $P$ to a characteristic function
 $\widehat{\Lambda}$ on $\widehat{P}$. Denote the omnioriented quasitoric
 orbifold derived from the model $(\widehat{P}, \widehat{\Lambda}  )$ by
 ${\bf Y}$.

Consider a small open neighborhood $U:=\{ x \in P:
\widehat{p}_0(x) < \epsilon\} $ of the face $F$, where $0 < \epsilon <1$.
 Denote $U \cap \widehat{P}$ by $\widehat{U}$.
 By Lemma \ref{class} we may assume that
 \begin{equation} f: U = F \times [0,1)^{k} \end{equation}
We also assume without loss of generality that the defining
function $\widehat{p}_{j}$ of the facet $F_{j}$ equals the $j$-th
coordinate $p_j$ of $\RR^n$ on $U$, for each $1\le j \le k$.

  Choose small positive numbers $\epsilon_1 < \epsilon_2< \epsilon $ and a
   smooth non-decreasing function $\delta: [0,\infty) \to \RR$
    such that
   \begin{equation}
   \delta(t)= \left\{ \begin{array}{ll} t & {\rm if} \, t < \epsilon_1 \\
   1 & {\rm if} \, t > \epsilon_2 \end{array} \right.
   \end{equation}

 Then define $\tau:  \widehat{P} \to  P  $ to be the map given by
\begin{equation}
 \tau( p_1,\ldots, p_k, p_{k+1},\ldots, p_n ) = ( \delta(\widehat{p}_0)^{ b_1} p_1, \ldots, \delta(\widehat{p}_0 )^{ b_k}
 p_k,p_{k+1}, \ldots, p_n ).
\end{equation}
   The  blow down map $\rho: ( \widehat{P} \times T_N/\sim) \to (P \times T_N/\sim) $
    is defined by
   \begin{equation}\label{rho}
   \rho({\bf p}, {\bf q}) = ( \tau({\bf p}), {\bf q}).
   \end{equation}

Since $\delta=1$ if $\widehat{p}_0 > \epsilon_2$, $\rho$ is a
diffeomorphism of orbifolds away from a tubular neighborhood of
$X(F)$. We study the map $\rho$ near $X(F)$.

 Let $w= \bigcap_{j=1}^n F_j$  be a vertex of $F$. Suppose $v$ be a vertex of $F_{0}$
 such that $\tau(v)=w$. Then the edge joining $v$ and $w$ is the
 intersection of $n-1$ facets common to both which must include
 $F_{k+1}, \ldots, F_n $. Therefore there are $k$ choices for
 $v$, namely $v_i =   \bigcap_{0\le j\neq i \le n} F_j $ with
 $1 \le i \le k$.

 Let $\widehat{p}_j = 0$ be the defining equation of the facet
 $F_j$ for $k+1 \le j\le n$. Order the facets at $w$ as $F_1,
 \ldots,
 F_n$, and those at $v_i$ as $F_1, \ldots, F_{i-1}, F_0, F_{i+1}, \ldots, F_n
 $. Let $z_{j,w}$ and $z_{j,v_i}$ be the coordinates on
 $\widetilde{X}_w$ and $\widetilde{Y}_{v_i}$ defined according to
 \eqref{homeo} and \eqref{cplxcoor}. Then by using a process
 similar to the one used for \eqref{chcoord3},
  we obtain the following description of
 $\rho$ near $Y_{v_i}$,
\begin{equation}\label{rhoinc} \begin{array}{ll}
z_{i,w} \circ \rho = z_{i,v_i}^{b_i} \sqrt{p_i \delta(\widehat{p}_0)^{ b_i} (\widehat{p}_0)^{-b_i}} & \\
 z_{j,w} \circ \rho = z_{i,v_i}^{b_j} z_{j,v_i} \sqrt{ \delta(\widehat{p}_0)^{ b_j} (\widehat{p}_0)^{-b_j} } & {\rm if} \; 1\le j
 \neq i \le k \\
 z_{j,w} \circ \rho = z_{j,v_i} & {\rm if }\; k+1 \le j \le n
 \end{array}
\end{equation}

We define a new coordinate system on $\widetilde{Y}_{v_i}$, for
each $1\le i \le k$, as follows.
\begin{equation}\label{newcoord} \begin{array}{ll}
z_{i,v_i}^{\prime} = z_{i,v_i} (\sqrt{ p_i} )^{1/b_i} \sqrt{ \delta(\widehat{p}_0) (\widehat{p}_0)^{-1}} &  \\
z_{j,v_i}^{\prime} = z_{j,v_i} (\sqrt{ p_i} )^{-b_j/b_i} & {\rm if} \; 1\le j \neq i \le k \\
z_{j,v_i}^{\prime} = z_{j,v_i} & {\rm if }\; k+1 \le j \le n
\end{array}
\end{equation}
This is a valid change of coordinates as $p_i$ is positive on
$\widetilde{Y}_{v_i}$ and $\delta(\widehat{p}_0)
(\widehat{p}_0)^{-1} $ is identically one near $\widehat{p}_0 =
0$.

 In these new coordinates, $\rho$ can be expressed as
\begin{equation}\label{rhoinc2} \begin{array}{ll}
z_{i,w} \circ \rho = (z_{i,v_i}^{\prime})^{b_i}  & \\
 z_{j,w} \circ \rho = (z_{i,v_i}^{\prime})^{b_j} z_{j,v_i}^{\prime}  & {\rm if} \; 1\le j
 \neq i \le k \\
 z_{j,w} \circ \rho = z_{j,v_i}^{\prime} & {\rm if }\; k+1 \le j \le n
 \end{array}
\end{equation}

\begin{lemma} The restriction  $\rho: {\bf Y}-{\bf Y}(F_0) \to {\bf
X}-{\bf X}(F) $ is a diffeomorphism of orbifolds.
\end{lemma}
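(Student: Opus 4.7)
The strategy is to verify the claim chart by chart, using the explicit formula \eqref{rhoinc2} for $\rho$ in local coordinates. Since $\tau$ is the identity wherever $\widehat{p}_0 > \epsilon_2$, the map $\rho$ coincides with the identity on the corresponding open set, so the only real content concerns the region covered by the charts $\widetilde{Y}_{v_i}$ with $v_i = \bigcap_{0 \le j \ne i \le n} F_j$ for $1 \le i \le k$.

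First, I would check that $\tau$ restricts to a diffeomorphism $\widehat{P} - F_0 \to P - F$ of manifolds with corners preserving each facet $F_j$ with $j \ge 1$. At a point where $\widehat{p}_0 > 0$ the Jacobian of $\tau$, expressed in the coordinates $(p_1,\ldots,p_n)$, is (after a trivial rearrangement) upper triangular with positive diagonal entries involving $\delta(\widehat{p}_0)^{b_j}$, so $\tau$ is a local diffeomorphism; bijectivity follows since the last $n-k$ coordinates are unaltered. Consequently $\rho$ is a continuous bijection of the underlying topological spaces. Next, at a vertex $v_i$ of $F_0$, a direct computation of the complex Jacobian of the lifted map $\tilde{\rho}:\widetilde{Y}_{v_i}\to\widetilde{X}_w$ via \eqref{rhoinc2} by expansion along the $i$-th row gives the determinant
\begin{equation}
b_i \,(z_{i,v_i}')^{\,b_i - 1 + \sum_{1 \le j \ne i \le k} b_j},
\end{equation}
nonzero precisely on $\{z_{i,v_i}' \ne 0\}$, so $\tilde{\rho}$ is a local biholomorphism there.

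The main obstacle is that the lifted map $\tilde{\rho}$ is globally $b_i$-to-one rather than a biholomorphism onto its image, so it does not directly give a chart isomorphism. To handle this, I would use the relation $\lambda_0 = \sum_{j=1}^{k} b_j \lambda_j$ together with multilinearity of the determinant to obtain $\det \Lambda_{v_i} = b_i \det \Lambda_w$, hence $|G_{v_i}|/|G_w| = b_i$. The inclusion $N(v_i) \hookrightarrow N(w)$ realizes $G_w$ as a quotient $G_{v_i}/K$, where $K \cong \ZZ/b_i\ZZ$ is generated by $[\lambda_i] \in N/N(v_i)$. A short computation using \eqref{action} and \eqref{newcoord} shows that $[\lambda_i]$ sends $z_{i,v_i}' \mapsto e^{2\pi \sqrt{-1}/b_i} z_{i,v_i}'$ and $z_{j,v_i}' \mapsto e^{-2\pi \sqrt{-1} b_j/b_i} z_{j,v_i}'$ for $1 \le j \ne i \le k$, fixing the remaining coordinates, and that each $z_{\ell,w} \circ \rho$ is invariant under this action. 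Therefore $K$ acts simply transitively on each fiber of $\tilde{\rho}$ lying over $\widetilde{X}_w - \widetilde{X}_w(F)$, and $\tilde{\rho}$ descends to a biholomorphism $\widetilde{Y}_{v_i}/K \to \widetilde{X}_w$ onto its image which intertwines the residual $G_w = G_{v_i}/K$ action on the source with the $G_w$ action on the target. Combined with the trivial identification at the remaining vertices and the bijectivity established above, this yields the required diffeomorphism of orbifolds.
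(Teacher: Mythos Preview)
Your argument has a genuine gap: it tacitly assumes that each $b_j$ is a positive integer. The powers $(z_{i,v_i}')^{b_j}$ in \eqref{rhoinc2} define a single-valued holomorphic lift $\tilde\rho: \widetilde{Y}_{v_i} \to \widetilde{X}_w$ only when the exponents are integers; your claimed inclusion $N(v_i) \hookrightarrow N(w)$ is equivalent to $\lambda_0 = \sum_j b_j \lambda_j \in N(w)$, which again forces $b_j\in\ZZ$; and $K \cong \ZZ/b_i\ZZ$ is meaningless for fractional $b_i$. But in this paper the $b_j$ are only assumed to be positive rationals---indeed any crepant blowdown has $\sum_j b_j = 1$, so every $b_j < 1$, and the lemma following Definition~\ref{resol} explicitly treats the case $b_i < 1$. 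For a concrete failure take $n=2$, $\lambda_1=(1,0)$, $\lambda_2=(1,2)$, $\lambda_0=(1,1)=\tfrac12\lambda_1+\tfrac12\lambda_2$: here $G_{v_1}$ is trivial while $G_w \cong \ZZ/2\ZZ$, so there is no surjection $G_{v_1} \to G_w$, and \eqref{rhoinc2} reads $z_{1,w}=(z_{1,v_1}')^{1/2}$, which admits no global lift to $\widetilde{X}_w$.

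The paper sidesteps this by arguing at the level of the underlying analytic spaces rather than constructing chart-to-chart lifts. Formula \eqref{rhoinc2} exhibits $\rho$, near the exceptional set, as the underlying map of a complex (weighted) blowup, hence an analytic isomorphism of $Y_{v_i}-Y(F_0)$ onto its image in $X_w$. Because the orbifolds are primitive, their local groups contain no complex reflections, and Prill's theorem \cite{[Pri]} then upgrades an analytic isomorphism of the quotient spaces to an isomorphism of the orbifold structures. Your Jacobian computation is correct on a local branch and could be used for the analytic-isomorphism step, but the passage from there to a diffeomorphism of orbifolds requires either Prill's result or a more careful common-cover argument (for instance via the lattice $N(v_i)\cap N(w)$) in place of the nonexistent kernel $K$.
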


\begin{proof} This is obvious outside $\pi^{-1}(U)$. On $\pi^{-1}(U)-X(F)$,
 by formula \eqref{rhoinc2}, $\rho$ is locally equivalent to a blowup in
complex geometry. Therefore $\rho$ is an analytic isomorphism on  $\pi^{-1}(U)-X(F)$.
However since our quasitoric orbifolds are primitive, there is no complex reflection
in our orbifold groups. Hence using the results of \cite{[Pri]},  analytic isomorphism
yields diffeomorphism of orbifolds.
\end{proof}

\begin{lemma} If ${\bf X}$ is
positively omnioriented, then so is a blowup ${\bf Y}$.
\end{lemma}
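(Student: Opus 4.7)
The plan is to check that every vertex of $\widehat{P}$ has positive sign. Vertices of $\widehat{P}$ lying outside the new facet $F_0$ keep their adjacent facets and characteristic vectors, so their signs are unchanged and hence positive. The only new vertices are those on $F_0$: for each vertex $w = \bigcap_{j=1}^n F_j$ of the face $F$, there are $k$ new vertices $v_1,\ldots,v_k$, where $v_i = F_0 \cap \bigcap_{j \ne i,\, 1\le j \le n} F_j$. So the task reduces to computing the sign of each $v_i$.

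Working in the local product coordinates $U = F \times [0,1)^k$ in which $F_j = \{p_j = 0\}$ for $1 \le j \le k$, I would invoke Lemma~\ref{class} to assume $\widehat{p}_0 = \sum_{j=1}^k a_j p_j - \epsilon$ with all $a_j > 0$; this is the generic form of a cut that produces the specified combinatorics. The inward normal to $F_0$ is then proportional to $(a_1,\ldots,a_k,0,\ldots,0)^t$; the inward normals $\vec{n}_j = e_j$ for $1 \le j \le k$; and $\vec{n}_j$ lies in $\{0\}^k \times \RR^{n-k}$ for $j > k$.

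Assume without loss of generality that the positive order at $w$ is $(F_1,\ldots,F_n)$, so that the inward-normal matrix at $w$ has the block form $\begin{pmatrix} I_k & 0 \\ 0 & M \end{pmatrix}$ with $\det M > 0$. I claim the positive order at $v_i$ is $(F_1,\ldots,F_{i-1},F_0,F_{i+1},\ldots,F_n)$, obtained by swapping $F_i$ for $F_0$. In this order the inward-normal matrix at $v_i$ is block-diagonal; the upper $k \times k$ block $A$ equals the identity except that its $i$-th column is $(a_1,\ldots,a_k)^t$, so Laplace expansion on that column gives $\det A = a_i > 0$, while the lower block is the same $M$. Thus the total determinant is $a_i \det M > 0$, confirming positivity of the order.

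Finally, the sign of $v_i$ is the sign of $\det[\lambda_1,\ldots,\lambda_{i-1},\lambda_0,\lambda_{i+1},\ldots,\lambda_n]$. Substituting $\lambda_0 = \sum_{j=1}^k b_j\lambda_j$ and using multilinearity in column $i$, every term with $j \ne i$ vanishes because $\lambda_j$ already appears as another column, leaving $b_i \det[\lambda_1,\ldots,\lambda_n] > 0$ since $b_i > 0$ and $w$ was positive. Hence $v_i$ has positive sign and ${\bf Y}$ is positively omnioriented. The main obstacle is the geometric identification of the positive order at the new vertices $v_i$; once this is pinned down via the block structure, the rest is a two-line determinant manipulation.
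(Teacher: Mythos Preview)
Your proof is correct and follows essentially the same approach as the paper: identify the positive ordering at each new vertex $v_i$ as $(F_1,\ldots,F_{i-1},F_0,F_{i+1},\ldots,F_n)$, then compute $\det\Lambda_{(v_i)} = b_i\det\Lambda_{(w)} > 0$. The paper justifies the positive-ordering claim in one sentence (``an inward normal vector to $F_0$ is a positive linear combination of the inward normal vectors to $F_1,\ldots,F_k$''), whereas you carry out the block-determinant computation explicitly in adapted coordinates; this is a more detailed version of the same reasoning rather than a different route.
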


\begin{proof} Recall the positive ordering of facets at a vertex $v$ in
section \ref{omnio} to define the matrix $\Lambda_{(v)}$ whose
determinant has the same sign as sign of $v$.

Let $w$ be any vertex of $F$ and $v_i$ be any vertex in
$\rho^{-1}(w)$. Let $F_1, \ldots, F_n$ be  positively ordered
facets at $w$.
 An inward normal vector to $F_0$ is a positive linear combination of the
inward normal vectors to $F_1, \ldots, F_k$. Therefore $F_1,
\ldots, F_{i-1}, F_0, F_{i+1} \ldots$, $F_n$ are  positively
ordered for each $i= 1, \ldots,k$.
 So the matrix $\Lambda_{(v_i)}$ is obtained by replacing
the $i$-th column of $\Lambda_{(w)}$, namely $\lambda_i$, by
$\lambda_0 = \sum_{j=1}^k b_j \lambda_j$. Therefore $\det
\Lambda_{(v_i)} = b_i \det \Lambda_{(w)}$. The lemma follows.
\end{proof}

\begin{defn}\label{resol} A blowdown $\rho$ is said to be a resolution if for
any vertex $w$ of the exceptional face $F$ and any vertex $v_i \in
\rho^{-1}(F)$ we have $o(G_{v_i}) < o(G_w)$.
\end{defn}

\begin{lemma} A blowdown $\rho$ is a resolution if $b_i< 1$ for each $i$.
\end{lemma}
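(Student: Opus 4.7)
The plan is to combine the determinantal identity obtained at the end of the proof of the previous lemma with the order formula $o(G_v) = |\det \Lambda_{v}|$ from equation \eqref{ordgv}. Recall that Definition \ref{resol} asks: for every vertex $w$ of the exceptional face $F$ and every vertex $v_i$ of $\widehat{P}$ lying over $w$, we want $o(G_{v_i}) < o(G_w)$. So the first step is simply to enumerate the vertices of $\widehat{P}$ lying over a fixed vertex $w = \bigcap_{j=1}^n F_j$ of $F$: as was observed just before equation \eqref{rhoinc}, these are precisely the $k$ vertices $v_i = \bigcap_{0 \le j \ne i \le n} F_j$ for $i = 1, \ldots, k$.

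Next I would invoke the computation from the previous lemma. Using the positive ordering of facets at $w$ (so that $\Lambda_{(w)} = [\lambda_1, \ldots, \lambda_n]$), the matrix $\Lambda_{(v_i)}$ is obtained from $\Lambda_{(w)}$ by replacing the $i$-th column $\lambda_i$ with $\lambda_0 = \sum_{j=1}^k b_j \lambda_j$. Multilinearity of the determinant, together with the fact that all columns $\lambda_j$ for $j \ne i$ in the sum already appear as other columns of $\Lambda_{(w)}$ and hence contribute zero, gives
\[ \det \Lambda_{(v_i)} = b_i \, \det \Lambda_{(w)}. \]
Taking absolute values and applying \eqref{ordgv} yields
\[ o(G_{v_i}) = b_i \cdot o(G_w). \]

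The final step is then immediate: if $b_i < 1$ for every $i = 1, \ldots, k$, we conclude that $o(G_{v_i}) < o(G_w)$ for each $i$. Since $w$ was an arbitrary vertex of $F$ and the $v_i$ range over all vertices of $\widehat{P}$ mapping to $w$, the condition of Definition \ref{resol} is satisfied, so $\rho$ is a resolution. There is no real obstacle here; essentially all the work was done in proving the previous lemma. The only small point worth double-checking is that the positive ordering used to form $\Lambda_{(v_i)}$ is indeed obtained from that of $\Lambda_{(w)}$ by the single column swap described above, but this was already verified in the preceding proof using the fact that the inward normal of $F_0$ is a positive combination of the inward normals of $F_1, \ldots, F_k$.
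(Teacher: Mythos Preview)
Your proposal is correct and follows essentially the same approach as the paper's proof: the paper's one-line argument simply cites \eqref{ordgv} and the determinantal identity $\det \Lambda_{v_i} = b_i \det \Lambda_{w}$ from the preceding lemma to conclude $o(G_{v_i}) = b_i\, o(G_w)$, which is exactly what you do with more surrounding detail.
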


\begin{proof}
The lemma holds since by \eqref{ordgv} we have $o(G_{v_i})= |\det
\Lambda_{v_i}| = b_i |\det \Lambda_{w}|= b_i o(G_w) $.
\end{proof}

\section{Pseudoholomorphic  blowdowns}\label{pseudo}

\begin{lemma}\label{pseubd}
Let $\rho: Y \to X $ be a blowdown along  a subset $X(F)$. Suppose
there exist holomorphic coordinate systems $z_{1,w}^{\ast},
\ldots, z_{n,w}^{\ast} $ on the uniformizing chart
$\widetilde{X}_w$ for every vertex $w$ of $F$, which produce an
analytic structure on a neighborhood $\pi^{-1}(U)$ of $X(F)$.
Assume further that this analytic structure extends to an almost
complex structure on ${\bf X}$. Then the blowup induces an almost
complex structure on ${\bf Y}$ which is analytic near the
exceptional set $Y(F_0)$. Moreover, with respect to these
structures $\rho $ is analytic near $Y(F_0)$ and an almost complex
diffeomorphism of orbifolds away from $Y(F_0)$.
\end{lemma}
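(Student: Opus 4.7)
The plan is to construct the almost complex structure $J_Y$ on $\mathbf{Y}$ by pulling back $J_X$ via $\rho$ away from the exceptional set and declaring the coordinates $z'_{j,v_i}$ from \eqref{newcoord} to be holomorphic near $Y(F_0)$, then checking that these two prescriptions patch smoothly. The hard part will be verifying that the local complex structures given by the $z'$-coordinates on different charts $\widetilde{Y}_{v_i}$ agree on their overlaps and glue to a single smooth structure across the exceptional divisor.

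First I would define $J_Y$ on $Y - Y(F_0)$ by the formula $J_Y := (d\rho)^{-1}\circ J_X\circ d\rho$. Since the previous lemma shows $\rho: \mathbf{Y} - \mathbf{Y}(F_0) \to \mathbf{X} - \mathbf{X}(F)$ is a diffeomorphism of orbifolds, this is a well-defined smooth almost complex structure there, and $\rho$ is automatically an almost complex diffeomorphism away from $Y(F_0)$. This handles the final clause of the lemma. Next, for each vertex $v_i$ of $F_0$ with $\rho(v_i) = w$, I would use the chart $\widetilde{Y}_{v_i}$ equipped with the coordinates $z'_{1,v_i},\ldots,z'_{n,v_i}$ and declare these to be holomorphic coordinates, producing an analytic complex structure $J^{(i)}$ on a neighborhood of $Y(F_0)\cap\widetilde{Y}_{v_i}$. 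The motivation is that by \eqref{rhoinc2}, in the pair of charts $(\widetilde{Y}_{v_i}, z'_{\bullet,v_i})$ and $(\widetilde{X}_w, z^\ast_{\bullet,w})$, the map $\rho$ is given by monomials and is therefore the standard complex blowup of an orbifold point — hence holomorphic.

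The central task is then to show $J^{(i)}$ agrees with $J_Y$ off the exceptional divisor, and that the various $J^{(i)}$'s patch to a single smooth almost complex structure on a neighborhood of $Y(F_0)$. For the first: off the exceptional divisor in $\widetilde{Y}_{v_i}$, the definition $J_Y = (d\rho)^{-1}\circ J_X\circ d\rho$ combined with holomorphicity of $\rho$ in the chosen coordinates on both sides forces $J_Y$ to coincide with the standard complex structure in the $z'_{\bullet,v_i}$ coordinates, which is exactly $J^{(i)}$. For the second: on an overlap $\widetilde{Y}_{v_i}\cap\widetilde{Y}_{v_j}$ minus the exceptional divisor, the transition function between $z'_{\bullet,v_i}$ and $z'_{\bullet,v_j}$ factors as $\rho_{v_j}^{-1}\circ\phi_{w',w}\circ\rho_{v_i}$, where $\phi_{w',w}$ is an analytic transition in $\widetilde{X}_\bullet$ (analytic by hypothesis) and the outer factors are holomorphic by construction; hence the transition is holomorphic away from the exceptional divisor. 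Being continuous and bounded across the exceptional divisor of complex codimension at least one, Riemann's removable singularity theorem (applied in the orbifold uniformizers) promotes it to a holomorphic map across the exceptional divisor, so the $J^{(i)}$'s patch analytically.

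Finally, I would assemble the pieces: on a neighborhood of $Y(F_0)$ use the compatible family of $J^{(i)}$'s; off $Y(F_0)$ use the pullback $J_Y$; these agree on the annular overlap by the first paragraph above, so they define a single smooth, orbifold, torus-invariant almost complex structure on $\mathbf{Y}$ which is analytic on the neighborhood of $Y(F_0)$ given by the $z'$-coordinates. The already-noted holomorphic form \eqref{rhoinc2} of $\rho$ in these coordinates then gives analyticity of $\rho$ near $Y(F_0)$, completing the argument. The main obstacle is the holomorphic extension argument across the exceptional divisor for the transition maps between the $\widetilde{Y}_{v_i}$; everything else is essentially bookkeeping once equation \eqref{rhoinc2} is in hand.
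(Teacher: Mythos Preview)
Your argument has a genuine gap at the step where you declare the $z'_{\bullet,v_i}$ coordinates from \eqref{newcoord} to be holomorphic and then invoke \eqref{rhoinc2} to conclude that $\rho$ is holomorphic into $(\widetilde{X}_w, z^{\ast}_{\bullet,w})$. Formula \eqref{rhoinc2} expresses $\rho$ as monomials from the $z'_{\bullet,v_i}$ coordinates to the \emph{standard} coordinates $z_{\bullet,w}$ of \eqref{cplxcoor}, not to the hypothesized holomorphic coordinates $z^{\ast}_{\bullet,w}$. By hypothesis these differ by smooth nonvanishing factors, $z^{\ast}_{j,w} = z_{j,w}\, f_j$, and the $f_j$ are \emph{not} assumed holomorphic. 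Consequently $\rho$ is not holomorphic from your $J^{(i)}$ into $J_X$, and the pullback structure $(d\rho)^{-1}\circ J_X\circ d\rho$ will not agree with the standard structure in the $z'_{\bullet,v_i}$ coordinates off the exceptional set. Your compatibility check between $J^{(i)}$ and $J_Y$ therefore fails, and so does the subsequent patching.

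The repair is precisely the content of the paper's proof: one must further twist the $z'_{\bullet,v_i}$ coordinates by suitable powers of the pulled-back factors $f_j\circ\tau$ to obtain new coordinates $z^{\ast}_{\bullet,v_i}$ (see \eqref{newcoord2}) in which $\rho$ is genuinely monomial from $z^{\ast}_{\bullet,v_i}$ to $z^{\ast}_{\bullet,w}$ (equation \eqref{rhoinc3}). With this correction in place the transitions between two charts $\widetilde{Y}_{u_1}$, $\widetilde{Y}_{u_2}$ on $Y$ are themselves monomial with exponents dictated by $\Lambda_{u_2}^{-1}\Lambda_{u_1}$, so holomorphic patching across the exceptional divisor is immediate and your Riemann removable-singularity step becomes unnecessary.
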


\begin{proof} Note that for two vertices $w_1$, $w_2$ of $F$,
the coordinates must be related as
\begin{equation}\label{w1w2}
z_{j,w_2}^{\ast} = \prod_{i=1}^n (z_{i,w_1}^{\ast})^{d_{ij}}
\end{equation}
where the $d_{ij}$s are  rational numbers determined from the
matrix $\Lambda_{w_2}^{-1} \Lambda_{w_1}$, see
\eqref{chcoord2} and \eqref{chcoord3}.

 Also the coordinates $z_{j,w}^{\ast}$ have to relate to the
 coordinates defined in \eqref{homeo} and \eqref{cplxcoor} as
 follows,
 \begin{equation}\label{zstar} z_{j,w}^{\ast} = z_{j,w} f_j, \; 1\le j \le n \end{equation}
  where each $f_j$ is smooth and non-vanishing on
  $\widetilde{X}_w$.
  For each $v_i \in \rho^{-1}(w)$ we define coordinates in
  its neighborhood, by modifying the coordinates of
  \eqref{newcoord} as follows,
  \begin{equation}\label{newcoord2} \begin{array}{ll}
z_{i,v_i}^{\ast} = z_{i,v_i}^{\prime} (f_i\circ \tau)^{1/b_i} &  \\
z_{j,v_i}^{\ast} = z_{j,v_i}^{\prime} (f_j \circ \tau)
(f_i\circ \tau)^{-b_j/b_i}   & {\rm if} \; 1\le j \neq i \le k \\
z_{j,v_i}^{\ast} = z_{j,v_i}^{\prime} & {\rm if }\; k+1 \le j
\le n
\end{array}
\end{equation}
In these coordinates $\rho$ takes the following form near $v_i$,
\begin{equation}\label{rhoinc3} \begin{array}{ll}
z_{i,w}^{\ast} \circ \rho = (z_{i,v_i}^{\ast})^{b_i}  & \\
 z_{j,w}^{\ast} \circ \rho = (z_{i,v_i}^{\ast})^{b_j} z_{j,v_i}^{\ast}  & {\rm if} \; 1\le j
 \neq i \le k \\
 z_{j,w}^{\ast} \circ \rho = z_{j,v_i}^{\ast} & {\rm if }\; k+1 \le j \le n
 \end{array}
\end{equation}
We define an almost complex structure $\widehat{J}$ on $\bf{Y}$ by
defining the coordinates $z_{j,v_i}^{\ast}$ to be holomorphic
near
 $Y(F)$  and  by $\widehat{J} = d\rho^{-1} \circ J \circ d\rho
$ away from it. This is consistent as $\rho$ is a diffeomorphism
of orbifolds on the complement of $Y_F$.

  By \eqref{w1w2} and \eqref{rhoinc3}, for any two vertices $u_1$
  and $u_2$ of $F_0$, we have
\begin{equation} z_{j,u_2}^{\ast} = \prod_{i=1}^n (z_{i,u_1}^{\ast})^{e_{ij}}
\end{equation}
for some rational numbers $e_{ij}$. But these numbers are
determined by the matrix $\Lambda_{u_2}^{-1} \Lambda_{u_1}$.
It is then obvious from the arguments about compatibility of
charts in section \ref{diffs} that the patching of the charts
$Y_{u_1}$ and $Y_{u_2}$ is holomorphic.
\end{proof}

Examples of blowdowns that satisfy the hypothesis of Lemma
\ref{pseubd} include blowdowns of four dimensional positively
 omnioriented quasitoric orbifolds constructed in \cite{[GP]}
  and toric blow-ups of simplicial toric varieties.

\begin{defn}\label{smfn}\cite{[GP]}
A function $f$ on $X$ is said to be smooth if
$f \circ \xi $ is smooth for every uniformizing system $(\widetilde{U},G,\xi)$.
 A complex valued smooth function $f$
 on an almost complex
orbifold $({\bf X}, J)$ is said to be $J$-holomorphic if the
differential $d(f \circ \xi)$ commutes with $J$ for every chart
$(\widetilde{U},G,\xi)$. We denote the sheaf of $J$-holomorphic
functions on ${\bf X}$ by $\Omega^0_{J,X}$. A continuous map
$\rho: Y \to X $ between almost complex orbifolds $({\bf Y}, J_2)$
and $({\bf X}, J_1) $ is said to be pseudo-holomorphic if $f \circ
\rho \in \Omega^0_{J_2,Y}(\rho^{-1}(U))$ for every $f \in
\Omega^0_{J_1,X}(U) $ for any open set $U  \subset X$; that is,
$\rho$ pulls back pseudo-holomorphic functions to
pseudo-holomorphic functions.
\end{defn}

\begin{lemma}\label{pseubd2} Blowdowns
 that satisfy the hypothesis of
lemma \ref{pseubd} are pseudoholomorphic.
\end{lemma}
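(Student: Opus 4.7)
The plan is to unpack Definition \ref{smfn} and verify the pseudoholomorphicity condition separately on two open pieces of ${\bf Y}$. Fix an open set $U\subset X$ and a function $f\in\Omega^0_{J,X}(U)$. We must show that for every orbifold chart $(\widetilde{V},G,\xi_V)$ of ${\bf Y}$ with $\xi_V(\widetilde{V})\subset\rho^{-1}(U)$, the differential $d(f\circ\rho\circ\xi_V)$ commutes with $\widehat{J}$. Since this is a local condition, cover $\rho^{-1}(U)$ by the tubular neighborhood $\rho^{-1}(\pi^{-1}(U_0))$ of the exceptional set $Y(F_0)$ (with $U_0$ the neighborhood of $F$ used in Lemma \ref{pseubd}) together with its complement, and verify the condition on each piece.

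Off the exceptional set, Lemma \ref{pseubd} asserts that $\rho$ is an almost complex diffeomorphism of orbifolds onto $X\setminus X(F)$; concretely, on matched uniformizing charts there is a smooth lift $\widetilde{\rho}$ of $\rho$ with $\xi_w\circ\widetilde{\rho}=\rho\circ\xi_V$ and $d\widetilde{\rho}\circ\widehat{J}=J\circ d\widetilde{\rho}$. Then
\begin{equation*}
d(f\circ\rho\circ\xi_V)\circ\widehat{J}=d(f\circ\xi_w)\circ d\widetilde{\rho}\circ\widehat{J}=d(f\circ\xi_w)\circ J\circ d\widetilde{\rho}=\sqrt{-1}\,d(f\circ\rho\circ\xi_V),
\end{equation*}
using $J$-holomorphicity of $f$ at the last step. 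So $f\circ\rho$ is $\widehat{J}$-holomorphic there.

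Near $Y(F_0)$, work with the coordinates $z^{\ast}_{j,w}$ on $\widetilde{X}_w$ supplied by the hypothesis and with the coordinates $z^{\ast}_{j,v_i}$ on $\widetilde{Y}_{v_i}$ constructed in \eqref{newcoord2}. By the construction of $\widehat{J}$ in Lemma \ref{pseubd}, both structures become the standard complex structure in these coordinates, so $f$, being $J$-holomorphic, pulls back to $\widetilde{X}_w$ as an ordinary holomorphic function of $(z^{\ast}_{1,w},\ldots,z^{\ast}_{n,w})$. Formula \eqref{rhoinc3} exhibits the lift of $\rho$ as an explicit monomial, and hence holomorphic, map in the starred coordinates. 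Composition of a holomorphic function with a holomorphic map is holomorphic, so $f\circ\rho\circ\xi_{v_i}$ is standard-holomorphic in the $z^{\ast}_{j,v_i}$, which is exactly $\widehat{J}$-holomorphicity on this chart.

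There is really no obstacle to address: all the substantive content lies in Lemma \ref{pseubd}, which produces $\widehat{J}$ precisely so as to be analytic near $Y(F_0)$ (making $\rho$ analytic there) and to satisfy $\widehat{J}=d\rho^{-1}\circ J\circ d\rho$ away from $Y(F_0)$. The only point requiring attention is that the two local descriptions of $\widehat{J}$ agree on their overlap so that the two verifications above assemble into a global one; but this compatibility was already established in the proof of Lemma \ref{pseubd} via \eqref{newcoord2}--\eqref{rhoinc3}. Consequently $f\circ\rho\in\Omega^0_{\widehat{J},Y}(\rho^{-1}(U))$, and Definition \ref{smfn} is satisfied.
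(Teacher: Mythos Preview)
Your argument away from the exceptional set is fine, but the treatment near $Y(F_0)$ has a real gap. You assert that formula \eqref{rhoinc3} exhibits the lift of $\rho$ as a monomial, and hence holomorphic, map in the starred coordinates. However the exponents $b_j$ appearing in \eqref{rhoinc3} are positive \emph{rationals}, not integers in general (the resolution condition $b_i<1$ and the crepant condition $\sum b_j=1$ would both be vacuous otherwise). So the ``lift'' $\widetilde{Y}_{v_i}\to\widetilde{X}_w$ is multi-valued and is not a holomorphic map; this is exactly why $\rho$ fails to be an orbifold morphism, as remarked in the introduction. Your appeal to ``composition of a holomorphic function with a holomorphic map'' therefore does not apply as stated.

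The paper's proof gets around this by working not with a lift of $\rho$ but with the $G_w$-invariant monomial generators $f=\prod_j (z^{\ast}_{j,w})^{d_j}$ of $\Omega^0_{J,X}(W)$. The defining integrality condition on the $d_j$ (that $\sum a_j d_j\in\ZZ$ whenever $\sum a_j\lambda_j\in N$), applied to $\lambda_0=\sum b_j\lambda_j\in N$, forces $\sum b_j d_j\in\ZZ_{\ge 0}$. Thus although no individual $z^{\ast}_{j,w}\circ\rho$ is single-valued on $\widetilde{Y}_{v_i}$, the $G_w$-invariant combination $f\circ\rho=(z^{\ast}_{i,v_i})^{\sum b_j d_j}\prod_{j\neq i}(z^{\ast}_{j,v_i})^{d_j}$ is a genuine monomial with nonnegative integer exponents; the paper then checks its $G_{v_i}$-invariance to conclude that it lies in $\Omega^0_{J,Y}(V_i)$. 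You can repair your argument either by inserting this computation, or alternatively by noting that $f\circ\rho\circ\xi_{v_i}$ is continuous everywhere and holomorphic off the hypersurface $\{z^{\ast}_{i,v_i}=0\}$ (where local single-valued branches of the lift exist), and then invoking the Riemann extension theorem across that hypersurface.
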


\begin{proof} Suppose $\rho: Y \to X$ is such a blowdown.
Since $\rho$ is an almost complex diffeomorphism of orbifolds away
from the exceptional set $Y(F_0)$, it suffices to check the
statement near $Y(F_0)$. Pick any vertex $w$ of $F$. Define $W=
X_w \cap \pi^{-1}(U)$. For any vertex $v_i \in \rho^{-1}(w)$, let
$V_i = Y_{v_i} \cap \rho^{-1}(\pi^{-1}(U))$. We will denote the
characteristic vectors at $v_i$ by $\widehat{\lambda}_j, \, j=1,
\ldots,n$. Note that
\begin{equation}
\widehat{\lambda}_j = \left\{ \begin{array}{ll} \lambda_j & {\rm
if}\, j\neq i \\ \lambda_0 & {\rm if}\, j=i. \end{array} \right.
\end{equation}

The ring $\Omega^0_{J_1, X}(W)$ is the $G_{w}$-invariant subring
of convergent power series in variables $z_{j,w}^{\ast}$. It is
generated by monomials of the form
\begin{equation}\label{f}
f= \prod_{j=1}^n (z_{j,w}^{\ast})^{d_j}
\end{equation}
where the $d_j$s are integers such that $\sum a_j d_j$ is an integer
whenever the vector $\sum a_j \lambda_j \in N$. This last condition
follows from invariance under action of the element $g\in G_w $ corresponding
to $\sum a_j \lambda_j$.

Using \eqref{rhoinc3} and $\lambda_0 = \sum_{j=1}^n b_j \lambda_j$ with $b_j=0$
for $j \ge k+1$, we get
\begin{equation}
f \circ \rho = (z_{i,v_i}^{\ast})^{\sum b_j d_j} \, \prod_{j\neq
i} (z_{j,v_i}^{\ast})^{d_j}.
\end{equation}

Take any element $h $ in $G_{v_i}$. Suppose $h$ is represented by $\sum c_j \widehat{\lambda}_j \in N$.
The action of $h$ on $f \circ \rho$ is multiplication by $e^{2\pi\sqrt{-1}\alpha}$, where
\begin{equation}
\alpha = c_i \sum_j b_j d_j + \sum_{j\neq i}c_j d_j
 = c_i b_i d_i + \sum_{j\neq i} (c_j + c_i b_j) d_j.
 \end{equation}

 Note that $ \eta := c_i b_i \lambda_i + \sum_{j\neq i} (c_j + c_i b_j)
\lambda_j = c_i \sum_j b_j \lambda_j + \sum_{j\neq i} c_j \lambda_j = \sum c_j \widehat{\lambda}_j$.
Hence this is an element of $N$.

 Suppose $f$ is a generator of $\Omega^0_{J_1, X}(W)$ as in \eqref{f}. Consider the action of the element of $G_w$
 corresponding to $\eta$ on $f$. It is multiplication by $e^{2\pi\sqrt{-1}\alpha}$.
 Since $f$ is $G_w$-invariant,
$\alpha $ is an integer. Hence $f\circ \rho$ is $G_{v_i}$
invariant. The ring $\Omega^0_{J_1, Y}(V_i)$ is the
$G_{v_i}$-invariant subring of convergent power series in
variables $z_{j,v_i}^{\ast}$. Therefore $f\circ \rho \in
\Omega^0_{J_1, Y}(V_i)$.
\end{proof}

The proof of the following corollary of Lemma \ref{pseubd} is
straightforward.

\begin{corollary}\label{seq}
 Consider a sequence of blowups $\rho_i: Y_i \to Y_{i -1} $ where
 $1\le i \le r$ and $\rho_1$ satisfies the hypothesis of Lemma
 \ref{pseubd}.
 Assume that the locus of the $i$-th blowup is  contained in the
exceptional set of the $(i-1)$-st blowup for every $i$. Then we can inductively
choose almost complex structures so that each blowdown map in the
sequence is pseudoholomorphic.
\end{corollary}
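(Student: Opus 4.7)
The plan is to argue by induction on $i$, applying Lemma \ref{pseubd} at each stage to transport an analytic structure across a blowdown, and Lemma \ref{pseubd2} to upgrade analyticity near each exceptional set to global pseudoholomorphicity. For the base case, Lemma \ref{pseubd} applied to $\rho_1$ produces an almost complex structure $J_1$ on ${\bf Y}_1$ that is analytic in a neighborhood of the exceptional set of $\rho_1$, in which the map $\rho_1$ takes the local form \eqref{rhoinc3}; Lemma \ref{pseubd2} then yields that $\rho_1$ is pseudoholomorphic with respect to the given structure on ${\bf Y}_0$ and $J_1$.

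For the inductive step, I would assume that $J_{i-1}$ has been constructed on ${\bf Y}_{i-1}$ and is analytic in some neighborhood of the exceptional set of $\rho_{i-1}$. I would then verify that $\rho_i$ satisfies the hypothesis of Lemma \ref{pseubd}. Let $F$ be the face of the polytope for ${\bf Y}_{i-1}$ whose associated suborbifold is the locus of $\rho_i$. By the hypothesis of the corollary, $F$ is contained in the exceptional facet of $\rho_{i-1}$, so each vertex $w$ of $F$ lies in the analytic neighborhood carrying $J_{i-1}$. Consequently $J_{i-1}$ provides holomorphic coordinates $z_{1,w}^{\ast},\ldots,z_{n,w}^{\ast}$ on each uniformizing chart $\widetilde{Y}_{i-1,w}$, defining a common analytic structure near $Y_{i-1}(F)$ which extends to $J_{i-1}$ globally on ${\bf Y}_{i-1}$. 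Lemma \ref{pseubd} then produces $J_i$ on ${\bf Y}_i$, analytic near the new exceptional set, and Lemma \ref{pseubd2} makes $\rho_i$ pseudoholomorphic with respect to $J_{i-1}$ and $J_i$. Iterating completes the proof.

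The one point that requires mild care is ensuring that the analytic neighborhood of the exceptional set of $\rho_{i-1}$ is large enough to cover the uniformizing charts of all vertices of $F$; this can be arranged by shrinking the charts if necessary, since these vertices are interior to the exceptional set where analyticity already holds. I do not foresee a genuine obstacle --- the containment hypothesis on the blowup loci is precisely what permits the repeated application of Lemma \ref{pseubd}, and Lemma \ref{pseubd2} handles the global upgrade at each step automatically.
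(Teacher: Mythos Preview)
Your proposal is correct and matches the paper's intent: the paper simply declares the corollary ``straightforward'' from Lemma~\ref{pseubd} and gives no explicit argument, and your inductive application of Lemma~\ref{pseubd} (to propagate analyticity near the exceptional set) together with Lemma~\ref{pseubd2} (to obtain pseudoholomorphicity) is exactly the natural elaboration of that claim.
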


\begin{theorem}\label{thmresol}  There exists a pseudoholomorphic resolution of
  singularity for any primitive positively omnioriented four dimensional
   quasitoric orbifold.
\end{theorem}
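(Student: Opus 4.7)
The polytope $P$ is $2$-dimensional, so the orbifold singularities of ${\bf X}$ form a finite set of isolated torus fixed points, one for each vertex $v$ of $P$ with $|\det\Lambda_v|>1$. My plan is to write down a Hirzebruch--Jung resolution of each such singularity combinatorially, then realize it as an iterated pseudoholomorphic blowdown using Lemma \ref{pseubd} and Corollary \ref{seq}. At a singular vertex with incident characteristic vectors $\lambda_1,\lambda_2$, the Hirzebruch--Jung continued-fraction algorithm produces a finite sequence of primitive lattice vectors in the cone $\RR_{\ge 0}\lambda_1+\RR_{\ge 0}\lambda_2$, each of which, at the moment of insertion, is a positive combination $b_1\lambda'_1+b_2\lambda'_2$ of its two then-current neighbors with $b_1,b_2\in(0,1)$. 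By the last lemma of Section \ref{blowdown}, inserting these vectors one at a time gives a sequence of combinatorial blowups that are all resolutions, and the process terminates with every refined vertex smooth.

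The key preparatory step is to construct an invariant orthogonal almost complex structure $J$ on ${\bf X}$ which, on a torus-invariant neighborhood of each singular vertex $v$, coincides with the standard analytic structure $J_0$ written in the chart coordinates $(z_{1,v},z_{2,v})$. I will adapt Kustarev's construction from Theorem \ref{thmacs}. Choose pairwise disjoint small closed neighborhoods $N_v\subset R_v\subset R$, one for each vertex $v$ of $P$. On each $\mathcal{E}|_{N_v}$ declare $J:=J_0$; positive omniorientation guarantees that condition $(\star)$ is satisfied there, since $J_0$ restricts to rotation by $+\pi/2$ on each normal bundle $\mathcal{N}_{F_i}$, matching the $T_{\lambda_i}$-action. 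I then run Kustarev's skeleton-by-skeleton extension on the remainder of $R$. At each stage the obstruction cocycle is a coboundary $\delta\beta$ by contractibility of $R$, and I will choose $\beta$ to vanish on cells contained in $\bigcup_v N_v$, exactly the bookkeeping already used inside the proofs of Theorems \ref{thmacs} and \ref{extn}; this perturbation leaves $J_0$ intact on $\bigcup_v N_v$. Spreading the resulting structure by the $T_{N(v)}$-action on each chart and verifying chart-compatibility as in the proof of Theorem \ref{thmacs} produces the desired $J$.

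With such a $J$ in hand, the resolution is assembled one vertex at a time. For the first Hirzebruch--Jung insertion $\mu_1$ at a singular vertex $v_1$, I take $F=\{v_1\}$ and set $z^\ast_{i,v_1}:=z_{i,v_1}$: these are holomorphic on $\widetilde{X}_{v_1}$, their analytic structure on a neighborhood of $X(F)$ is precisely the one $J$ already carries there, and $J$ extends to all of ${\bf X}$ by construction, so the hypothesis of Lemma \ref{pseubd} is met. The lemma then supplies a pseudoholomorphic blowdown $\rho_1:{\bf Y}_1\to{\bf X}$ together with an almost complex structure $J_1$ on ${\bf Y}_1$ that is analytic near the exceptional set. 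Each remaining Hirzebruch--Jung blowup at $v_1$ has its locus inside the exceptional set of its predecessor, so Corollary \ref{seq} upgrades the full tower at $v_1$ to a pseudoholomorphic sequence. Because each blowdown is a diffeomorphism of orbifolds away from its exceptional set, and the $N_v$ were chosen pairwise disjoint, the current almost complex structure on the partial resolution still agrees with the original analytic model on a neighborhood of every unresolved singular vertex $v_2,v_3,\ldots$; I then repeat the argument at each of them in turn. Termination is guaranteed by the combinatorics of the first paragraph, producing the desired pseudoholomorphic resolution $\rho:{\bf Y}\to{\bf X}$.

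The only genuinely technical step is the refinement of Kustarev's construction in the second paragraph -- arranging the obstruction-killing cochain $\beta$ to be zero on the prescribed subcomplex $\bigcup_v N_v$. This is exactly the flexibility already exploited inside Theorems \ref{thmacs} and \ref{extn}, and it goes through once one checks that $J_0$ satisfies $(\star)$ on each $N_v$, which is immediate from positive omniorientation. Everything else is either elementary Hirzebruch--Jung combinatorics or a direct invocation of Lemma \ref{pseubd} together with Corollary \ref{seq}.
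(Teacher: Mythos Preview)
Your overall strategy matches the paper's: obtain an almost complex structure on ${\bf X}$ that is analytic in the chart coordinates near every vertex (so the hypothesis of Lemma \ref{pseubd} holds at each singular point), then resolve the cyclic quotient singularities by an iterated sequence of blowups governed by Corollary \ref{seq}. The paper's proof is much terser because it outsources the first step entirely, invoking Theorem 3.1 of \cite{[GP]} to supply such a structure; it then just says the singularities are cyclic and appeals to Corollary \ref{seq}.

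Where you differ is that you manufacture the required structure internally, by re-running the Kustarev construction of Theorem \ref{thmacs} with the additional constraint $J=J_0$ on small neighborhoods $N_v$ of the vertices, using the same cochain-modification trick as in Theorem \ref{extn}. This is a legitimate alternative and has the virtue of making the proof self-contained within the present paper, at the cost of having to set up the relative obstruction argument carefully. Your explicit use of the Hirzebruch--Jung continued fraction to produce insertions with $b_1,b_2\in(0,1)$ is also more detailed than the paper's bare assertion that cyclic singularities can be resolved.

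One small point you should make explicit: the Kustarev machinery produces structures orthogonal with respect to a fixed invariant metric $\mu$, so for $J_0$ to serve as the prescribed value on each $N_v$ you must first choose $\mu$ to coincide with the standard flat metric on $\widetilde{X}_v$ near each vertex (e.g.\ by a partition-of-unity construction as in the paper's metric discussion). Without this, $J_0$ need not be $\mu$-orthogonal and your initial data would not fit into the framework. This is easily arranged but should be stated.
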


\begin{proof} For any primitive positively omnioriented four dimensional
   quasitoric orbifold, Theorem 3.1 of \cite{[GP]} produces an almost complex
   structure that satisfies the hypothesis of Lemma \ref{pseubd} for every
   vertex. The singularities are all cyclic. We can resolve them by applying
   a sequence of blow-ups as in Corollary \ref{seq}.
\end{proof}

\section{Crepant blowdowns}\label{crepbd}

\begin{defn}\label{crepant}
A blowdown is called crepant if $\sum b_j = 1 $.
\end{defn}

This has the following geometric interpretation.

 \begin{defn}
Given an almost complex $2n$-dimensional orbifold $({\bf X}, J)$,
we define the
 canonical sheaf $K_{X}$ to be the sheaf of continuous $(n,0)$-forms on $X$;
  that is, for any orbifold chart
$(\widetilde{U},G,\xi)$ over an open set $U \subset X$,
 $K_X (U) = \Gamma( \wedge^n \mathcal{T}^{1,0}(\widetilde{U})^{\ast})^G$ where $\Gamma$ is the functor
 that takes continuous sections.
\end{defn}

 An almost complex orbifold is called
Gorenstein or $SL$ orbifold if the linearization of every local
group element $g$ belongs to $SL(n,\CC)$.  For an $SL$-orbifold
${\bf X}$, the canonical sheaf is a complex line bundle over $X$.

\begin{lemma} Suppose $\rho: Y \to X $ is a  pseudoholomorphic
 blowdown of $SL$ quasitoric orbifolds along a face $F$
 satisfying the hypothesis of Lemma \ref{pseubd}.
  Then $\rho$ is crepant if and only if
 $\rho^{\ast} K_X = K_Y$.
\end{lemma}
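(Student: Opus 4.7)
The plan is a local Jacobian computation in the holomorphic charts provided by Lemma \ref{pseubd}, combined with a sheaf-theoretic interpretation of the $SL$ condition. Since $\rho$ is an almost complex diffeomorphism of orbifolds outside the exceptional set $Y(F_0)$, the identification $\rho^{\ast}K_X=K_Y$ is automatic there, and the only question is whether it extends across $Y(F_0)$. Fix a vertex $w$ of $F$ together with a preimage vertex $v_i\in\rho^{-1}(w)$, and use the coordinates $z_{j,w}^{\ast}$ and $z_{j,v_i}^{\ast}$ in which $\rho$ takes the analytic form \eqref{rhoinc3}. Because both orbifolds are $SL$, the top forms $\omega_w:=dz_{1,w}^{\ast}\wedge\cdots\wedge dz_{n,w}^{\ast}$ and $\omega_{v_i}:=dz_{1,v_i}^{\ast}\wedge\cdots\wedge dz_{n,v_i}^{\ast}$ are $G_w$- and $G_{v_i}$-invariant respectively and locally generate the invertible sheaves $K_X$ and $K_Y$. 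The lemma then reduces to determining when $\rho^{\ast}\omega_w$ is a nowhere-vanishing $\mathcal{O}$-multiple of $\omega_{v_i}$ across $Y(F_0)$.

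The core step is to compute $\rho^{\ast}\omega_w$ from \eqref{rhoinc3}. Differentiating term by term, I get $\rho^{\ast}dz_{i,w}^{\ast}=b_i(z_{i,v_i}^{\ast})^{b_i-1}\,dz_{i,v_i}^{\ast}$; for $1\le j\ne i\le k$, $\rho^{\ast}dz_{j,w}^{\ast}$ is the sum of a multiple of $dz_{i,v_i}^{\ast}$ and the ``diagonal'' piece $(z_{i,v_i}^{\ast})^{b_j}\,dz_{j,v_i}^{\ast}$; and $\rho^{\ast}dz_{j,w}^{\ast}=dz_{j,v_i}^{\ast}$ for $k+1\le j\le n$. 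Taking the wedge product, any cross-term contributing a second factor of $dz_{i,v_i}^{\ast}$ is annihilated, so only the diagonal pieces survive, and the exponents of $z_{i,v_i}^{\ast}$ combine to produce
\[ \rho^{\ast}\omega_w \;=\; b_i\,(z_{i,v_i}^{\ast})^{\,\sum_{j=1}^k b_j \,-\,1}\,\omega_{v_i}. \]

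Finally, since $b_i\ne 0$ and $\{z_{i,v_i}^{\ast}=0\}$ is a local defining equation of the exceptional divisor $Y(F_0)$ in $\widetilde{Y}_{v_i}$, the coefficient on the right is a local unit across $Y(F_0)$ if and only if its exponent vanishes, i.e.\ $\sum_{j=1}^k b_j=1$, which is exactly Definition \ref{crepant}. This equivalence is clearly independent of the chosen vertices $w$ and $v_i$ (the transition functions of $K_X$ and $K_Y$ are holomorphic units on overlaps not meeting the exceptional locus), so it yields $\rho^{\ast}K_X=K_Y$ globally iff $\rho$ is crepant. The only delicate point is the wedge-product bookkeeping above; I do not foresee a genuine obstacle beyond that, since the $SL$ hypothesis ensures that $K_X$ and $K_Y$ are genuine invertible sheaves and the formulas of Lemma \ref{pseubd} are already adapted to the holomorphic structure.
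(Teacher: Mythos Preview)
Your proposal is correct and follows essentially the same approach as the paper: both reduce to a local Jacobian computation in the coordinates of \eqref{rhoinc3}, pull back the generating top form, and obtain $\rho^{\ast}(dz_{1,w}^{\ast}\wedge\cdots\wedge dz_{n,w}^{\ast})=b_i\,(z_{i,v_i}^{\ast})^{\sum_{j=1}^k b_j-1}\,dz_{1,v_i}^{\ast}\wedge\cdots\wedge dz_{n,v_i}^{\ast}$, from which the equivalence with $\sum b_j=1$ is immediate. Your write-up is slightly more explicit about why the off-diagonal terms in the wedge product drop out and about the role of the $SL$ hypothesis in making $K_X$, $K_Y$ locally free of rank one, but the argument is the same.
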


\begin{proof}  We consider the canonical sheaf $K_X$ as a sheaf of modules
 over the sheaf of continuous functions $\mathcal{C}^0_X$.
  Since $\rho$ is an almost complex diffeomorphism away
from the exceptional set it suffices to check the equality of the
$\rho^{\ast} K_Y$ and $K_X$ on the neighborhood
$\rho^{-1}(\pi^{-1} (U)) \subset Y$ of the exceptional set. Choose
any vertex $w$ of F. On $X_w \cap \pi^{-1} (U)$, the sheaf $K_X$
is generated over the sheaf $\mathcal{C}^0_X$ by the form
$dz_{1,w}^{\ast} \wedge \ldots \wedge dz_{n,w}^{\ast} $, see
\eqref{zstar}. Let $v_i$ be any preimage of $w$ under $\rho$.
Similarly on $Y_{v_i} \cap \rho^{-1}(\pi^{-1} (U)) $, $K_Y$ is
generated over the sheaf $\mathcal{C}^0_Y$ by the form
$dz_{1,v_i}^{\ast} \wedge \ldots \wedge dz_{n,v_i}^{\ast} $.

 Using
\eqref{rhoinc3} we have
\begin{equation}\label{drhoinc3} \begin{array}{ll}
\rho^{\ast} dz_{i,w}^{\ast} =  b_i (z_{i,v_i}^{\ast})^{b_i-1}  dz_{i,v_i}^{\ast}  & \\
 \rho^{\ast} dz_{j,w}^{\ast}  =  (z_{i,v_i}^{\ast})^{b_j}  dz_{j,v_i}^{\ast} +
  b_j (z_{i,v_i}^{\ast})^{b_j-1} z_{j,v_i}^{\ast}  dz_{i,v_i}^{\ast}    & {\rm if} \; 1\le j
 \neq i \le k \\
 \rho^{\ast} dz_{j,w}^{\ast}  = dz_{j,v_i}^{\ast} & {\rm if }\; k+1 \le j \le
 n.
 \end{array}
\end{equation}
Therefore we have \begin{equation} \rho^{\ast} (dz_{1,w}^{\ast}
\wedge \ldots \wedge dz_{n,w}^{\ast}) = b_i
(z_{i,v_i}^{\ast})^{b_1 + \ldots +b_k -1} dz_{1,v_i}^{\ast} \wedge
\ldots \wedge dz_{n,v_i}^{\ast}.
\end{equation}
The lemma follows.
\end{proof}

\section{Chen-Ruan Cohomology}\label{crc}

The Chen-Ruan cohomology group is built out of the ordinary
cohomology of certain copies of singular strata of an orbifold
called twisted sectors. The twisted sectors of orbifold toric
varieties was computed in \cite{[Po]}. The determination of such
sectors for quasitoric orbifolds is similar in essence. Another
important feature of Chen-Ruan cohomology is the grading which is
 rational in general. In our case the grading will  depend on
 the omniorientation.

 Let ${\bf X}$ be an omnioriented
quasitoric orbifold. Consider any element $g$ of the group $G_F $
\eqref{gx3}. Then $g$ may be represented by a vector $\sum_{j \in
\mathcal{I}(F)} a_j \lambda_j $. We may restrict $a_j$ to $[0,1)\cap \QQ$.
Then the above representation is unique. Then define the degree
shifting number or age of $g$ to be
\begin{equation}\label{age}
\iota(g)= \sum a_j.
\end{equation}

For faces $F$ and $H$ of $P$ we write $F \le H$ if $F$ is a
sub-face of $H$, and $F < H$ if it is a proper sub-face. If $F \le H
$ we have a natural inclusion of $G_H$ into $G_F$ induced by the
inclusion of $N(H)$ into $N(F)$. Therefore we may regard $G_H$ as
a subgroup of $G_F$. Define the set
\begin{equation}
G_F^{\circ} = G_F - \bigcup_{F < H} G_H
\end{equation}
Note that $G_F^{\circ} = \{ \sum_{j \in \mathcal{I}(F)} a_j \lambda_j |
0 < a_j < 1 \} \cap N  $, and $G_P^{\circ}= G_P =\{0\}$.

\begin{defn}\label{CR}
 We define the Chen-Ruan orbifold cohomology
 of an omnioriented quasitoric orbifold ${\bf X}$ to be
$$ H^{\ast}_{CR}({\bf X}, \RR ) =
\bigoplus_{F \le P} \bigoplus_{ g\in G_F^{\circ}} H^{\ast - 2
\iota(g)} (X(F), \RR).$$
 Here $H^{\ast}$ refers to singular cohomology or equivalently to
 de Rham cohomology of invariant forms when $X(F)$ is considered
  as the orbifold ${\bf X}(F)$.
 The pairs $(X(F), g)$ where $F<P$  and $ g\in G_F^{\circ}$ are
called twisted sectors of ${\bf X}$. The pair $(X(P),1)$, i.e. the
underlying space $X$, is called the untwisted sector. We denote
the Betti number rank$(H^d_{CR}({\bf X}))$ by $h^d_{CR}$.
\end{defn}

Note that if ${\bf X}$ is a manifold then its Chen-Ruan cohomology
is same as its singular cohomology.

\subsection{Poincar\'e duality}\label{PD}
Poincar\'e duality is established in a similar fashion as for
compact almost complex orbifolds.
 We need to distinguish the copies of
$X(F)$ corresponding to different twisted sectors. Therefore for
$g \in G_F^{\circ}$, we define the space
\begin{equation}
S(F,g) = \{(x,g): x \in X(F) \}.
\end{equation}
Of course $S(F,g)$ is homeomorphic to $X(F)$. It is denoted by
${\bf S}(F,g)$ when endowed with an orbifold structure which is
the structure of ${\bf X}(F)$ with an additional trivial action of
$G_F$ at each point. With this structure, it is a suborbifold of
${\bf X}$ in a natural way.
 The untwisted sector is denoted by $S(P,1)$. In this
notation the Chen-Ruan groups may be written as
\begin{equation}
H^{\ast}_{CR}({\bf X}, \RR ) = \bigoplus_{F \le P} \bigoplus_{
g\in G_F^{\circ}} H^{\ast - 2 \iota(g)} (S(F,g), \RR)
\end{equation}

\begin{lemma}\label{codim}
Suppose $ g\in G_F^{\circ}$. Then $2\iota(g) + 2\iota(g^{-1}) = 2n
- \dim(X(F)) $.
\end{lemma}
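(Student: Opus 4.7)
The plan is to exploit the unique representation of elements of $G_F^\circ$ in the form $\sum_{j\in \mathcal{I}(F)} a_j \lambda_j$ with $a_j \in (0,1)\cap \QQ$, and simply compute $\iota(g) + \iota(g^{-1})$ directly.

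First I would verify that if $g \in G_F^\circ$ has its canonical representative $\eta = \sum_{j\in \mathcal{I}(F)} a_j \lambda_j$ with $0 < a_j < 1$, then $g^{-1}$ also lies in $G_F^\circ$ and admits the canonical representative $\sum_{j\in \mathcal{I}(F)} (1-a_j)\lambda_j$. This is the key observation: we have $-\eta \equiv \sum (1-a_j)\lambda_j \pmod{N(F)}$ because $\sum_{j\in \mathcal{I}(F)} \lambda_j \in N(F)$, and each coefficient $1-a_j$ again lies strictly between $0$ and $1$, which both confirms uniqueness of this representative and shows $g^{-1}$ hits no proper sub-face subgroup $G_H$ with $F<H$.

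Consequently, by definition \eqref{age},
\begin{equation}
\iota(g) + \iota(g^{-1}) = \sum_{j\in \mathcal{I}(F)} a_j + \sum_{j\in \mathcal{I}(F)} (1-a_j) = |\mathcal{I}(F)|.
\end{equation}
Next I would identify $|\mathcal{I}(F)|$ geometrically. Since $P$ is a simple polytope, $F$ is the intersection of exactly $n - \dim F$ facets, so $|\mathcal{I}(F)| = n - \dim F$. The invariant suborbifold $X(F) = \pi^{-1}(F)$ has real dimension $2\dim F$ (it is itself a quasitoric orbifold over the polytope $F$, so $\dim X(F) = 2\dim F$). Combining these,
\begin{equation}
2\iota(g) + 2\iota(g^{-1}) = 2|\mathcal{I}(F)| = 2n - 2\dim F = 2n - \dim X(F),
\end{equation}
as claimed.

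There is no substantial obstacle here — the lemma is essentially a bookkeeping statement about ages. The only thing requiring care is ensuring that the representative $\sum (1-a_j)\lambda_j$ of $g^{-1}$ is the correct canonical one (i.e., with coefficients in $(0,1)$) so that formula \eqref{age} can be applied without modification; once the identity $\sum_{j\in \mathcal{I}(F)}\lambda_j \in N(F)$ is invoked, everything falls out immediately.
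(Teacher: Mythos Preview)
Your proof is correct and follows essentially the same approach as the paper: both identify the canonical representative of $g^{-1}$ as $\sum_{j\in\mathcal{I}(F)}(1-a_j)\lambda_j$, observe that each coefficient lies in $(0,1)$, and then add the ages to obtain $|\mathcal{I}(F)| = n - \dim F = \tfrac{1}{2}(2n - \dim X(F))$. Your write-up is slightly more explicit about why this representative is the correct one (noting $\sum_{j\in\mathcal{I}(F)}\lambda_j \in N(F)$) and why $g^{-1}\in G_F^\circ$, but the argument is the same.
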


\begin{proof} When $F=P$, $G_P^{\circ}= \{0\}$ and the result
is obvious. Suppose $F= \bigcap_{i=1}^k F_i $. Then
$g=\sum_{i=1}^k a_i \lambda_i$ where each $0< a_i <1$. Then
$g^{-1}$ is represented by the vector $\sum_{i=1}^k -a_i \lambda_i$
in $N$ modulo $N(F)$. Therefore $g^{-1}$ may be identified with
the vector $\sum_{i=1}^k (1-a_i)\lambda_i$. Note that $ 0 < 1 -
a_i<1$ for each $i$. Therefore the age of $g^{-1}$,
$\iota(g^{-1})= \sum_{i=1}^k (1-a_i) $. Hence $2\iota(g) +
2\iota(g^{-1}) = 2 \sum_{i=1}^k a_i + 2\sum_{i=1}^k (1-a_i) = 2k =
2n - \dim(X(F)) $.
\end{proof}

 For any compact orientable orbifold, there exists a notion of
orbifold integration $\int^{orb}$ for invariant top dimensional
forms which gives Poincar\'e duality for the de Rham cohomology of
the orbifold, see \cite{[CR]}. For a chart ${\bf U}=(\widetilde{U}, G, \xi)$
orbifold integration for an invariant form $\omega$ on $\widetilde{U}$ is
defined by
\begin{equation}
\int_{{\bf U}}^{orb} \omega = \frac{1}{o(G)} \int_{\widetilde{U}} \omega.
\end{equation}

 Let $I: {\bf S}(F,g) \to {\bf
S}(F,g^{-1})$ be the diffeomorphism of orbifolds defined by
$I(x,g)= (x, g^{-1})$. We define a bilinear pairing
\begin{equation}\label{pairing1}
\langle , \rangle^{orb}_{(F,g)}: H^{d - 2\iota(g)} (S(F,g)) \times
H^{2n -d - 2\iota(g^{-1})}(S(F,g^{-1})) \longrightarrow \RR
\end{equation}
for every $0\le d \le 2n$ by
\begin{equation}\label{pairing2}
\langle \alpha,\beta \rangle^{orb}_{(F,g)}= \int_{{\bf
S}(F,g)}^{orb} \alpha \wedge I^{\ast}(\beta).
\end{equation}
This pairing is nondegenerate because of Lemma \ref{codim}.
 By taking a direct sum of the pairing \eqref{pairing1} over
 all pairs of sectors $((F,g), (F,g^{-1}))$ for $F \le P$, we
 get a nonsingular pairing for each $0\le d \le 2n$
\begin{equation}\label{pairing3}
\langle , \rangle^{orb}: H^{d}_{CR} ({\bf X}) \times H^{2n
-d}_{CR}({\bf X}) \longrightarrow \RR.
\end{equation}

\section{McKay correspondence}

First we introduce some notation. Consider a codimension $k$ face
$F= F_1 \cap \ldots \cap F_k$ of $P$ where $k \ge 1$.
  Define a $k$-dimensional cone $C_F$ in $N\otimes \RR$ as follows,
\begin{equation}\label{cf}
C_F = \{ \sum_{j=1}^k a_j \lambda_j: a_j \ge 0 \}
\end{equation}
 The group $G_F$ can be identified with the subset $Box_F $ of
 $C_F$, where
\begin{equation}\label{boxf}
 Box_F := \{
\sum_{j=1}^k a_j \lambda_j: 0\le a_j <1 \} \cap N.
\end{equation}
Consequently the set $ G_F^{\circ}$ is identified with the subset
\begin{equation}\label{boxfo}
 Box_F^{\circ} :=  \{ \sum_{j=1}^k a_j
\lambda_j: 0 < a_j < 1 \} \cap N \end{equation} of the interior of
$C_F$. We define $Box_P = Box_P^{\circ}= \{0\} $.

 Suppose $v=F_1\cap \ldots \cap F_n$ is a vertex of $P$. Then
$Box_v =  \bigsqcup_{v\le F} Box_{F}^{\circ} $. This implies
\begin{equation}\label{Gdecom}
G_v = \bigsqcup_{v \le F} G_F^{\circ}
\end{equation}

\subsection{Euler characteristic}
An almost complex orbifold is $SL$ if the linearization of each
$g$ is in $SL(n,\CC)$. This is equivalent to $\iota(g)$ being
integral for every twisted sector.
 Therefore, to suit our purposes, we make the following definition.

\begin{defn}\label{quasisl}
 An omnioriented quasitoric
orbifold is said to be quasi-$SL$ if the age of every twisted
sector is an integer.
\end{defn}

\begin{lemma}\label{lemeuler} Suppose ${\bf X}$ is a quasi-$SL$ quasitoric orbifold.
 Then the Chen-Ruan Euler
characteristic of ${\bf X}$ is given by
$$ \chi_{CR} ({\bf X}) =  \sum_{v} o(G_v) $$ where $v$ varies over
all vertices of $P$.
\end{lemma}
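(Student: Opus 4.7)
The plan is to compute $\chi_{CR}({\bf X})$ directly from the definition of Chen-Ruan cohomology together with the combinatorial identity \eqref{Gdecom}. The computation has essentially no analytic content; it is a bookkeeping argument once one recognizes why the quasi-$SL$ hypothesis matters.

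First I would observe that since ${\bf X}$ is quasi-$SL$, every age $\iota(g)$ is an integer, so the grading shift $2\iota(g)$ is an even integer. Because an even shift preserves alternating sums of dimensions, the graded piece $H^{\ast-2\iota(g)}(X(F),\RR)$ contributes precisely $\chi(X(F))$ to the Chen-Ruan Euler characteristic. Summing over all sectors gives
\begin{equation*}
\chi_{CR}({\bf X}) = \sum_{F\le P} \sum_{g\in G_F^\circ} \chi(X(F))
 = \sum_{F\le P} |G_F^\circ|\, \chi(X(F)).
\end{equation*}

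Next I would compute $\chi(X(F))$. The invariant suborbifold ${\bf X}(F)$ is itself a quasitoric orbifold whose orbit polytope is $F$, and its torus-fixed points are exactly the preimages of the vertices of $F$. A generic linear functional on $F$ lifts (via the projection $\pi$) to a Morse-Bott function on $X(F)$ whose critical set is this finite collection of fixed points and whose indices are all even, so $\chi(X(F))$ equals the number of vertices $v(F)$ of $F$. (Alternatively one may use the standard CW decomposition of a quasitoric orbifold, as in \cite{[PS]}.)

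Finally I would interchange the order of summation, grouping by vertex rather than by face:
\begin{equation*}
\chi_{CR}({\bf X}) = \sum_{F\le P} v(F)\, |G_F^\circ|
 = \sum_{v} \sum_{F \ge v} |G_F^\circ|,
\end{equation*}
and then invoke the disjoint decomposition \eqref{Gdecom}, $G_v = \bigsqcup_{v\le F} G_F^\circ$, which gives $\sum_{F\ge v} |G_F^\circ| = |G_v| = o(G_v)$. This yields $\chi_{CR}({\bf X}) = \sum_v o(G_v)$, as required.

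The argument has no real obstacle; the only subtle point is the verification that the quasi-$SL$ hypothesis is exactly what is needed for the degree shifts to preserve Euler characteristics, and that $\chi(X(F))=v(F)$ for each face (which ultimately rests on the existence of a perfect Morse function on quasitoric orbifolds). Everything else is combinatorial reindexing via \eqref{Gdecom}.
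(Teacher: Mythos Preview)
Your proposal is correct and follows essentially the same approach as the paper: use the quasi-$SL$ hypothesis to ensure the degree shifts are even (so each sector contributes $\chi(X(F))$), identify $\chi(X(F))$ with the number of vertices of $F$, and then reindex the double sum via \eqref{Gdecom}. The only cosmetic difference is that the paper computes $\chi(X(F))$ by stratifying $X(F)$ into torus orbits $X(H^{\circ}) \cong H^{\circ} \times (S^1)^{\dim H}$ (which have zero Euler characteristic unless $H$ is a vertex), whereas you invoke a Morse function or the CW decomposition of \cite{[PS]}; both routes yield the same vertex count.
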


\begin{proof}
 Note that each $X(F)$ is a quasitoric orbifold. So its
cohomology is concentrated in even degrees, see \cite{[PS]}. Since
${\bf X}$ is quasi-$SL$, the shifts $2 \iota(g)$ in grading are
also even integers.
 Therefore the
Euler characteristic of Chen-Ruan cohomology is given by
\begin{equation}\label{euler1}
\chi_{CR} ({\bf X}) =  \sum_{F \le P} \chi (X(F))\cdot
o(G_F^{\circ}).
\end{equation}

Each $X(F)$ admits a decomposition into even dimensional strata as
follows
\begin{equation} X(F) = \bigsqcup_{H \le F} X(H^{\circ})
\end{equation}
where $H^{\circ}$ is the relative interior of $H$ and $
X(H^{\circ})= \pi^{-1}(H^{\circ}) $. We have
\begin{equation}\label{euler2}
\chi (X(F))= \sum_{H\le F} \chi (X(H^{\circ}))
\end{equation}
However $X(H^{\circ}) $ is homeomorphic to the product of
$H^{\circ}$ with  $(S^1)^{{\rm dim}(H)} $. Therefore $\chi
(X(H^{\circ}))= 0 $ unless $H$ is a vertex. Hence
\begin{equation}\label{euler3}
\chi (X(F))= {\rm number \; of\; vertices\; of \; } F.
\end{equation}
This formula also follows from the description of the homology
groups of a quasitoric orbifold in \cite{[PS]}.

 Using \eqref{Gdecom}, \eqref{euler1} and \eqref{euler3}, we
have the desired formula for $\chi_{CR} ({\bf X})$.
\end{proof}

\begin{lemma} The crepant blowup of a quasi-$SL$ quasitoric orbifold is
quasi-$SL$.
\end{lemma}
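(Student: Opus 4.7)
The plan is to verify directly that every twisted sector of ${\bf Y}$ has integer age, using the crepancy hypothesis $\sum_{i=1}^k b_i = 1$ exactly once. A twisted sector of ${\bf Y}$ is specified by a pair $(G, h)$ with $G$ a face of $\widehat{P}$ and $h \in G_G^{\circ}$. I would split the argument according to whether $F_0 \subset G$.

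If $F_0 \not\subset G$, then $G = \bigcap_{j \in J} F_j$ with $J \subset \{1,\dots,m\}$, and the characteristic vectors $\{\lambda_j : j \in J\}$ attached to $G$ in $\widehat{P}$ are identical to those attached to the corresponding face $G^P = \bigcap_{j \in J} F_j$ of $P$. Hence $G_G^{\circ} = G_{G^P}^{\circ}$ with the same age function \eqref{age}, and integrality of $\iota(h)$ is immediate from the quasi-$SL$ hypothesis on ${\bf X}$.

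If $F_0 \subset G$, write $G = F_0 \cap \bigcap_{j \in J} F_j$, so that
\begin{equation*}
h = a_0 \lambda_0 + \sum_{j \in J} a_j \lambda_j \in N, \qquad 0 < a_0, a_j < 1,
\end{equation*}
and $\iota(h) = a_0 + \sum_{j\in J} a_j$. Substituting $\lambda_0 = \sum_{i=1}^k b_i \lambda_i$, one rewrites $h = \sum_l c_l \lambda_l$ as a nonnegative real combination of characteristic vectors of $P$ meeting at a vertex near the blow-up locus. For each $l$ set $c_l = n_l + \tilde c_l$ with $n_l = \lfloor c_l \rfloor$ and $\tilde c_l \in [0,1)$; then $h' = \sum_l \tilde c_l \lambda_l$ represents the same class as $h$ modulo the sublattice generated by the relevant $\lambda_l$. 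This $h'$ lies in $G_{G^P}^{\circ}$ for the face $G^P$ of $P$ cut out by those facets with $\tilde c_l > 0$, and its age in ${\bf X}$ is $\sum_l \tilde c_l$, which is an integer by quasi-$SL$ of ${\bf X}$.

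The crepancy condition enters in the final bookkeeping step: a direct summation gives
\begin{equation*}
\sum_l c_l = a_0 \sum_{i=1}^k b_i + \sum_{j\in J} a_j = a_0 + \sum_{j\in J}a_j = \iota(h),
\end{equation*}
so $\iota(h) = \sum_l n_l + \sum_l \tilde c_l$ is a sum of two integers. The main technical nuisance is keeping track of the overlap of $J$ with $\{1,\dots,k\}$ when combining the $a_0 b_i \lambda_i$ and $a_j \lambda_j$ terms, together with verifying that the facets with $\tilde c_l > 0$ all meet at a common vertex of $P$ (so that $G^P$ is a genuine face). This last point follows because every $\lambda_l$ appearing in $h'$ lies in $N(w)$ for the vertex $w$ of $F$ underlying the chosen chart of $\widehat{P}$, and in a simple polytope any subset of characteristic vectors incident to a vertex corresponds to a face.
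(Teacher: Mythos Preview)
Your argument is essentially the same as the paper's: pass to a vertex $w$ of $F$ below the face in question, expand $\lambda_0=\sum b_j\lambda_j$, reduce coefficients modulo $1$ to land in $G_w$, invoke quasi-$SL$ of ${\bf X}$ to get integrality of the reduced age, and use $\sum b_j=1$ to match the two sums. One notational slip to fix: your case split should read ``$G\subset F_0$'' (equivalently $0\in\mathcal I(G)$), not ``$F_0\subset G$''; since $F_0$ is a facet, the latter is vacuous for proper faces and contradicts your own expression $G=F_0\cap\bigcap_{j\in J}F_j$.
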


\begin{proof} Suppose the blowup is along a face $F= F_1 \cap \ldots \cap F_k$.
 The new sectors that appear correspond to $G_H^{\circ}$
where $H < F_0$. Take any vertex $v$  in $H$. Suppose $v$ projects
to the vertex $w$ of $F$ under the blowdown.  Without loss of
generality assume $w = \bigcap_{j=1}^n F_j $. Then $v =
\bigcap_{0\le j\neq i \le n} F_j $ for some
 $1 \le i \le k$. Without loss of generality assume $i=1$.
 Since $v \le H $, $\mathcal{I}(H) \subset \{0, 2, \ldots, n \}$.
Therefore any $g\in G_H^{\circ}$ may be represented by an element
$\eta= c_0 \lambda_0 + \sum_{j=2}^n c_j \lambda_j $ of $N$ where
each $ c_j \in [0,1)\cap \QQ $. We need to show that the age of
$g$, namely $c_0 + \sum_{j=2}^n c_j $, is an integer.

 But using $\lambda_0 = \sum_{j=1}^k b_j
\lambda_j $ we get that $\eta \in C_w$. In fact
\begin{equation}
\eta = c_0 b_1 \lambda_1 + \sum_{j=2}^k (c_0 b_j + c_j)\lambda_j +
\sum_{j=k+1}^n c_j \lambda_j
\end{equation}
We may write $\eta= \sum_{j=1}^n (m_j + a_j)\lambda_j$ where each
$m_j$ is an integer and each $a_j \in [0,1)\cap \QQ$. Then
$\sum_{j=1}^n a_j \lambda_j $ corresponds to an element of $G_w$.
Since ${\bf X}$ is quasi-$SL$, $\sum_{j=1}^n a_j$ must be an
integer. Therefore $\sum_{j=1}^n (m_j + a_j) $ is an integer.
Hence $c_0 b_1 + \sum_{j=2}^k (c_0 b_j + c_j) + \sum_{j=k+1}^n c_j
$ is an integer. Using $\sum_{j=1}^k b_j = 1$, this yields that
$c_0 + \sum_{j=2}^n c_j$ is an integer.
\end{proof}

\begin{theorem}\label{thmeuler} The Euler characteristic of
Chen-Ruan cohomology is preserved under
 a crepant blowup of a quasi-$SL$ quasitoric orbifold.
 \end{theorem}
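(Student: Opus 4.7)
My plan is to reduce the theorem to a purely combinatorial identity by invoking Lemma \ref{lemeuler}. Since the previous lemma guarantees that the crepant blowup ${\bf Y}$ is again quasi-$SL$, Lemma \ref{lemeuler} applies to both orbifolds and gives
$$\chi_{CR}({\bf X}) = \sum_{w} o(G_w), \qquad \chi_{CR}({\bf Y}) = \sum_{v} o(G_v),$$
where $w$ ranges over vertices of $P$ and $v$ over vertices of $\widehat{P}$. So it suffices to match these two sums under the combinatorial operation of truncation.

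Next I would compare vertex sets. Writing $\widehat{P} = \{\widehat{p}_0 \ge 0\}\cap P$, the vertices of $\widehat{P}$ split into two groups: those in $\{\widehat{p}_0 > 0\}$, which are precisely the vertices of $P$ not lying in $F$ and whose local models (and hence $o(G_v)$) are unchanged; and those on the new facet $F_0$, created by the truncation. Hence the theorem reduces to showing
$$\sum_{w\text{ vertex of }F} o(G_w) \;=\; \sum_{v\text{ vertex of }F_0} o(G_v).$$

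To identify the vertices on $F_0$, fix a vertex $w$ of $F$ with $F= F_1\cap\cdots\cap F_k$ and with incident facets $F_1,\ldots,F_n$ at $w$. The $n-k$ edges at $w$ obtained by dropping some $F_j$ with $j>k$ lie inside $F$ (they are edges of the face $F$), while the $k$ edges obtained by dropping some $F_i$ with $1\le i\le k$ must exit $F$: such an edge contains $w\in F_i$ but is not contained in $F_i$, so its other endpoint fails to lie in $F_i$, hence not in $F$. For $H$ sufficiently close to $F$, each exiting edge is cut exactly once by $H$, producing a distinct new vertex $v_i = F_0\cap \bigcap_{1\le j\le n,\,j\ne i} F_j$ of $\widehat{P}$. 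A direct check shows that every vertex of $\widehat{P}$ on $F_0$ arises this way from a unique pair $(w,i)$, so each $w$ contributes exactly the $k$ vertices $v_1,\ldots,v_k$ to the right-hand sum.

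Finally I would compute each contribution. Recall from the proof that blowup preserves positivity that $\Lambda_{v_i}$ is obtained from $\Lambda_w$ by replacing the $i$-th column $\lambda_i$ with $\lambda_0 = \sum_{j=1}^{k} b_j\lambda_j$, so multilinearity of the determinant gives $\det \Lambda_{v_i} = b_i \det \Lambda_w$ and hence by \eqref{ordgv}
$$o(G_{v_i}) = b_i\, o(G_w).$$
Using the crepancy condition $\sum_{i=1}^{k}b_i=1$,
$$\sum_{i=1}^{k} o(G_{v_i}) = \Bigl(\sum_{i=1}^{k} b_i\Bigr) o(G_w) = o(G_w),$$
and summing over vertices $w$ of $F$ yields the desired identity. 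The main technical hurdle here is the combinatorial bijection between new vertices on $F_0$ and pairs $(w,i)$ with $w$ a vertex of $F$ and $i\in\mathcal{I}(F)$; once this bookkeeping is set up, the determinantal identity and crepancy conclude the proof immediately.
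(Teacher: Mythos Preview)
Your proposal is correct and follows essentially the same approach as the paper: reduce via Lemma \ref{lemeuler} to a comparison of vertex contributions, identify for each vertex $w$ of $F$ the $k$ new vertices $v_1,\ldots,v_k$ on $F_0$, use $o(G_{v_i}) = b_i\, o(G_w)$, and conclude by the crepancy condition $\sum b_i = 1$. You are simply more explicit than the paper about the bookkeeping (splitting vertices of $\widehat{P}$ into old and new, and verifying the bijection between new vertices and pairs $(w,i)$), which the paper leaves implicit from the description of the blowdown in Section~\ref{blowdown}.
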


 \begin{proof} Let $\rho: Y \to X$ be a crepant blowdown along a face
 $F= \bigcap_{j=1}^k F_j$ of
 $P$. Let $w$ be any vertex of $P$ and let $v_1, \ldots, v_k$ be
 the vertices of $\widehat{P}$ such that $\rho(v_i)= w$.
Suppose $w= \bigcap_{1\le j \le n} F_j$. Then $v_i= F_0 \cap
\bigcap_{1 \le j \neq i \le n} F_j$.

The contribution of $w$ to $\chi_{CR} ({\bf X})$ is $o(G_w)= |\det
\Lambda_{w}|$, see \eqref{ordgv}. The contribution of each $v_i$
to $\chi_{CR} ({\bf Y})$ is $o(G_{v_i}) = |\det \Lambda_{v_i} | =
b_i |\det \Lambda_{w}| = b_i o(G_w) $. As the blowdown is crepant,
we have $o (G_w) = \sum_{i=1}^k o (G_{v_i})$. The theorem follows.
\end{proof}

\subsection{Orbifold $K$-groups}\label{kgps} Orbifold $K$-theory is the $K$-theory
of orbifold vector bundles. Adem and Ruan  \cite{[AR]} proved that there is an
isomorphism of groups between orbifold $K$-theory and $\ZZ_2$-graded orbifold
cohomology theory of any reduced differentiable orbifold, with field coefficients.
Almost complex structure is not necessary for this result as the grading for
orbifold cohomology is the ordinary grading.
For a quasi-$SL$ quasitoric orbifold, since the degrees of cohomology classes as
well degree shifting numbers are even integers,  $K^0_{orb}$ has rank same as
the Euler characteristic of Chen-Ruan cohomology
 and $K^1_{orb}$ is trivial.
Hence by Theorem \ref{thmeuler}, the orbifold $K$-groups are
preserved under crepant blowup of quasi-$SL$ quasitoric orbifolds.

\subsection{Betti numbers} We prove a stronger version
of McKay correspondence, namely the invariance of Betti numbers of
Chen-Ruan cohomology under crepant blowdown, when dimension of
${\bf X}$ is less or equal to six. A more restrictive result was
proved for dimension four in \cite{[GP]}.

\begin{theorem}\label{thmmckay6} Suppose $\rho: Y \to X$ is a crepant
blowdown of quasi-$SL$ quasitoric orbifolds of dimension $\le 6$.
Then the Betti numbers of Chen-Ruan cohomology of ${\bf X}$ and
${\bf Y }$ are equal.
\end{theorem}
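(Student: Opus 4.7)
The plan is to localize the Chen--Ruan Poincar\'e polynomial at each vertex of the orbit polytope and then reduce the claim to a combinatorial identity at each vertex of the blown-up face. Since both $\mathbf{X}$ and $\mathbf{Y}$ are quasi-$SL$, every age shift $2\iota(g)$ is an even integer, and by \cite{[PS]} each suborbifold $\mathbf{X}(F)$ has cohomology concentrated in even degrees with Betti numbers given by the $h$-vector of $F$. Fix a generic linear functional $\ell$ on $\RR^n$. For each vertex $w$ of $P$, let $D(w) \subseteq \{1,\dots,n\}$ index the descending edges of $P$ at $w$, and for each $g \in G_w$ with unique box representative $\sum_j a_j \lambda_j$, $a_j \in [0,1)\cap \QQ$, set $S(g) := \{j : a_j > 0\}$. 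The standard Morse-theoretic description of the $h$-vector of a face $F$ combined with the decomposition $G_w = \bigsqcup_{w \le F} G_F^\circ$ from \eqref{Gdecom} then gives
\[
P_{CR}(\mathbf{X}, t) \;=\; \sum_{w \in \mathrm{Vert}(P)} B_w(t), \qquad B_w(t) \;:=\; \sum_{g \in G_w} t^{\,2\iota(g) + 2 |D(w) \cap S(g)|},
\]
and analogously for $\mathbf{Y}$.

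Since vertices of $\widehat{P}$ away from a small neighborhood of $F$ correspond bijectively to vertices of $P$ away from $F$ with all local data (down-sets, isotropy groups, box representatives) preserved, the theorem reduces to the local polynomial identity
\[
B_w(t) \;=\; \sum_{i=1}^{k} B_{v_i}(t)
\]
at each vertex $w$ of the exceptional face $F = \bigcap_{j=1}^k F_j$, with $v_1,\dots,v_k$ its preimages. At $t = 1$ this recovers the identity $o(G_w) = \sum_i o(G_{v_i})$ underlying Theorem \ref{thmeuler}; in fact that identity comes from an explicit bijection $\Phi \colon \bigsqcup_i G_{v_i} \to G_w$ obtained by rewriting $h = c_0 \lambda_0 + \sum_{j \ne i} c_j \lambda_j$ via $\lambda_0 = \sum_j b_j \lambda_j$ and reducing modulo $1$. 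A direct computation shows $\Phi$ shifts age by $\iota(h) - \iota(\Phi(h)) = \sum_{j \ne i} \lfloor c_0 b_j + c_j \rfloor$, so the sought polynomial identity amounts to
\[
\sum_{j \ne i} \lfloor c_0 b_j + c_j \rfloor \;=\; |D(w) \cap S(\Phi(h))| \;-\; |D(v_i) \cap S(h)|
\]
for every $h \in G_{v_i}$.

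Primitivity of the characteristic vectors forces $k \ge 2$, so for $n \le 3$ there are only two cases. When $k = 2$, the $n - k$ directions at $w$ transverse to $F$ are replicated unchanged at each $v_i$ and contribute a common factor to $B_w$ and each $B_{v_i}$; the residual two-dimensional identity is the one proved in \cite{[GP]} for four-dimensional quasitoric orbifolds. The genuinely new situation is $k = n = 3$, in which $F = w$ is a vertex and $\widehat{P}$ acquires a triangular exceptional facet $F_0$ with vertices $v_1, v_2, v_3$; the descending edge of $v_i$ opposite $F_0$ reverses the orientation of $e_i$ at $w$, while the two remaining edges of $v_i$ lie along $F_0$ and contribute Morse-index shifts determined by the relative values of $\ell$ on $v_1, v_2, v_3$. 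The main obstacle is verifying the combinatorial identity displayed above in this $k = 3$ case; both the crepancy $b_1 + b_2 + b_3 = 1$ and the quasi-$SL$ integrality of ages are essential, since they are what force the floor-function "carries" produced by $\Phi$ to line up with the Morse-index differences between $w$ and the three preimages. In higher dimensions the bijection $\Phi$ becomes more intricate and the age/Morse cancellation no longer follows from crepancy alone, which is the combinatorial source of the dimension restriction.
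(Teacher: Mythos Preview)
Your proposal is incomplete, and the paper's argument is entirely different and far shorter.

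First, a concrete error in your setup: the Morse contribution of a vertex $w$ to the Poincar\'e polynomial of $X(F)$ is governed by the descending edges of $w$ that \emph{lie in} $F$, and these are indexed by the complement of $S(g)$, not by $S(g)$ itself. So the exponent in $B_w(t)$ should involve $|D(w)\setminus S(g)|$ rather than $|D(w)\cap S(g)|$. (Check the untwisted sector $g=0$: with your formula every vertex contributes $t^0$, which does not recover $P(X,t)$.) This propagates into your displayed identity relating floor-function carries to Morse-index differences.

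Second, and more importantly, you never actually prove the identity in the $k=3$ case. You write that ``the main obstacle is verifying the combinatorial identity'' and then offer only a heuristic for why crepancy and quasi-$SL$ness should make it work. That is exactly the step a proof must supply; without it you have a strategy, not an argument.

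The paper bypasses all of this. Its proof runs as follows: in dimension six, primitivity of characteristic vectors rules out facet sectors, so every twisted sector has age $1$ or $2$ and contributes only to $H^2_{CR}$ or $H^4_{CR}$; the untwisted sector alone supplies $H^0_{CR}$ and $H^6_{CR}$, so $h^0_{CR}$ and $h^6_{CR}$ are trivially preserved. Poincar\'e duality (Section~\ref{PD}) forces $h^2_{CR}$ and $h^4_{CR}$ to change by the same amount under blowup, and preservation of the Euler characteristic (Theorem~\ref{thmeuler}) then forces that common change to be zero. No vertex-by-vertex bijection or floor-function bookkeeping is needed; the dimension bound enters precisely because it leaves only one Poincar\'e-dual pair of middle Betti numbers, which the Euler-characteristic constraint pins down.
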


\begin{proof} Assume that $\dim({\bf X})=6$. Note that there are no
facet sectors as every characteristic vector is primitive.
Therefore the twisted sectors correspond to either vertices or
edges. The age of a vertex sector is either $1$ or $2$ and such a
sector contributes a generator to $H^2_{CR}$ or $H^4_{CR}$
respectively. An edge sector always has age $1$. Since such a
sector is a sphere it contributes a generator to $H^2_{CR}$ as
well as $H^4_{CR}$. There is only one generator in $H^0_{CR}$ and
$H^6_{CR}$ coming from the untwisted sector. Therefore $h^0_{CR}$
and $h^6_{CR}$ are unchanged under blowup. If $h^2_{CR}$ changes
under blowup then by Poincar\'e duality, $h^4_{CR}$ must change
by the same amount. That would contradict the conservation of
Euler characteristic. Therefore all Betti numbers are unchanged.

The proof for dimension four is similar.
\end{proof}

\begin{lemma}\label{thmh2} Suppose $\rho: Y \to X$ is a crepant
blowdown of quasi-$SL$ quasitoric orbifolds of dimension $\ge 8$.
Then $h^2_{CR}({\bf Y}) \ge h^2_{CR}({\bf X}) $.
\end{lemma}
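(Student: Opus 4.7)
The plan is to reduce the inequality to a count of age-$1$ twisted sectors and then compare those counts before and after the blowup by an explicit combinatorial injection determined by the star subdivision. Since every invariant suborbifold $X(G)$ is itself a quasitoric orbifold, its cohomology is concentrated in even degrees \cite{[PS]}. The only contributions to $H^2_{CR}$ are therefore the untwisted $H^2(X)$ and the $H^0(X(G)) \cong \RR$ summands from age-$1$ twisted sectors, so
\[
h^2_{CR}({\bf X}) = h^2(X) + N_1({\bf X}), \qquad N_1({\bf X}) := \#\{(G,g) : g \in G_G^\circ,\ \iota(g)=1\}.
\]
The Davis--Januszkiewicz type formula gives $h^2(X) = m - n$ in terms of the number $m$ of facets of $P$ \cite{[PS]}, so adding the facet $F_0$ yields $h^2(Y) = h^2(X)+1$, and it suffices to prove $N_1({\bf Y}) \geq N_1({\bf X}) - 1$.

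The twisted sectors split into three classes: those attached to faces $G$ of $P$ with $G \not\subseteq F$, which are unaffected by the blowup; \emph{removed} sectors attached to faces $G \subseteq F$ (equivalently $\mathcal{I}(G) \supseteq \{1,\ldots,k\}$); and \emph{added} sectors attached to proper sub-faces $H$ of $F_0$ in $\widehat{P}$ (equivalently $0 \in \mathcal{I}(H)$). The first class contributes equally to both sides, so the task is to inject age-$1$ elements in the removed class, minus a single distinguished element, into age-$1$ elements in the added class. The distinguished element is the sector $(F,\lambda_0)$: the vector $\lambda_0 = \sum_{l=1}^k b_l \lambda_l$ lies in $N \cap Box_F^\circ$ and the crepancy condition $\sum b_l = 1$ forces $\iota(\lambda_0) = 1$.

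For any other removed age-$1$ element $\eta = \sum_{l \in \mathcal{I}(G)} a_l \lambda_l$ with $a_l \in (0,1)$ and $\sum a_l = 1$, I would set $\beta = \min_{1 \leq l \leq k}(a_l/b_l)$ with minimizing set $I_{\min} \subseteq \{1,\ldots,k\}$, and rewrite
\[
\eta = \beta \lambda_0 + \sum_{l \in \mathcal{I}(G) \setminus I_{\min}} c_l \lambda_l,
\]
where $c_l = a_l - \beta b_l$ for $l \leq k$, $l \notin I_{\min}$, and $c_l = a_l$ for $l > k$. A short calculation using crepancy shows $\beta + \sum c_l = 1$, and that $\beta \in (0,1)$ together with each $c_l \in (0,1)$ whenever $(G,\eta) \neq (F,\lambda_0)$. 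The index set $\mathcal{I}(H_\ast) := \{0\} \cup (\mathcal{I}(G) \setminus I_{\min})$ is contained in $\mathcal{I}(v_{i_\ast}) = \{0\} \cup (\{1,\ldots,n\} \setminus \{i_\ast\})$ for any $i_\ast \in I_{\min}$ and any vertex $w$ of $G$ yielding $v_{i_\ast}$ under blowup, so $H_\ast$ is a proper sub-face of $F_0$ and $(H_\ast, \eta)$ is an age-$1$ sector in the added class. Injectivity follows from the explicit inverse $(H, \zeta) \mapsto (G, \zeta)$ with $\mathcal{I}(G) = \{1,\ldots,k\} \cup (\mathcal{I}(H) \setminus \{0,1,\ldots,k\})$, obtained by re-expanding $\lambda_0 = \sum b_l \lambda_l$ in the representation of $\zeta$.

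The main obstacle will be the combinatorial verification that $\mathcal{I}(H_\ast)$ really indexes a face of $\widehat{P}$ (and not merely a consistent collection of characteristic vectors), together with the strict bounds $c_l \in (0,1)$; both depend crucially on the crepancy identity $\sum b_l = 1$ and on the strict positivity of the original $a_l$ for $l \leq k$. Granting these, the three classes combine to give $N_1({\bf Y}) \geq N_1({\bf X}) - 1$, and hence $h^2_{CR}({\bf Y}) \geq h^2_{CR}({\bf X})$.
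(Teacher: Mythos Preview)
Your proof is correct and follows essentially the same strategy as the paper: both reduce to showing $N_1({\bf Y}) \ge N_1({\bf X}) - 1$ via $h^2 = m-n$, identify $\lambda_0$ as the single lost age-$1$ sector compensated by the new facet, and then verify that every other age-$1$ lattice point in $C_w$ remains an age-$1$ sector after the star subdivision. The paper phrases the last step geometrically (the age-$1$ locus $A_w$ is a hyperplane containing all of $\lambda_0,\lambda_1,\ldots,\lambda_n$, so coefficients in any subdivided cone still sum to $1$), whereas you compute the coefficient $\beta = \min_{l\le k}(a_l/b_l)$ explicitly and check $\beta + \sum c_l = 1$ by hand using $\sum b_l = 1$; these are the same construction, and your explicit injection with its stated left inverse is if anything a cleaner bookkeeping of the face-by-face correspondence than the paper's vertex-by-vertex phrasing.
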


\begin{proof} The sectors that contribute to $h^2_{CR}$ are the untwisted
sector and twisted sectors of age one. Each age one sector  contributes one
to $h^2_{CR}$. The untwisted sector contributes $h^2$. It is proved in \cite{[PS]}
that $h^2 = m-n $ where $m$ is the number of facets and  $n$ is the dimension of the
polytope.

 Suppose the blowup is along a face
$F$.  The twisted sectors that may  get affected by the blowup are the ones
that intersect $X(F)$. These must be of the form $(S,g)$ where $g$
  belongs to $\bigcup_{w} G_w$ where $w$ varies over vertices of $F$.
 Consider any such $w$. Suppose $\lambda_1, \ldots, \lambda_n$ are the
 corresponding characteristic vectors. Note that the age one
 sectors of $X$ coming from $G_w$ belong to the set
\begin{equation}
A_w = \{ \sum_{j=1}^n a_j \lambda_j: \sum_{j=1}^n a_j= 1 \}
\end{equation}
 Since $\lambda_1, \ldots,
\lambda_n $ are linearly independent, there exists a unique vector
$v $ such that the dot product $\langle \lambda_i , v \rangle = 1$
for each $i$. Hence $A_w $ is a hyperplane given by
\begin{equation}\label{hyp}
A_w = \{ x\in N\otimes \RR: \langle x , v \rangle = 1 \}.
\end{equation}

Note that since the blowup is crepant,
$\lambda_0 \in A_w \cap C_F \cap N$.
The sector corresponding to $\lambda_0$ is lost under the blowup.
However the loss in $h^2_{CR}$ because of it is compensated by the
 contribution from the untwisted sector on account of the new facet $F_0$.

Consider any other age one sector $g$ of ${\bf X}$ in $G_w$. $C_w$ is partitioned into
$n$ sub-cones  by the introduction of $\lambda_0$.
 Accordingly $g$  may be represented
by $\sum_{0 \le j\neq i \le n} c_j \lambda_j$ with  each
$c_j\ge 0$, for some $1\le i \le n$.
This means that $g$ becomes a sector of $Y$ coming from $G_{v_i}$ where
$v_i= \bigcap_{0\le j \neq i\le n} F_j$.
Now $g \in A_w$ as it is an age one sector of ${\bf X}$. Also each $\lambda_j\in A_w$.
Therefore by \eqref{hyp}, $ \sum_{0\le j\neq i \le n} c_j = 1$. This implies that
each $0 \le c_j <1$ and age of $g$ as a sector of ${\bf Y}$ is one as well. The
lemma follows.
\end{proof}

\subsection{Example}\label{exapl} We will consider the weighted
projective space ${\bf X} = \PP(1,3,3,3,1)$ which is a toric
variety. The generators of the one dimensional cones of the fan of
$X$ are $ e_1 =(1,0,0,0)$, $e_2=(0,1,0,0)$, $e_3=(0,0,1,0)$, $e_4=
(0,0,0,1)$ and $e_5 =(-1,-3,-3,-3)$. ${\bf X}$ may be realized as
a quasitoric orbifold with the $4$-dimensional simplex as the
polytope and the $e_i$s as characteristic vectors. However
$\PP(1,3,3,3,1)$ is not an $SL$ orbifold and this choice of
characteristic vectors coming from the fan does not make it an
omnioriented quasi-$SL$ quasitoric orbifold. So we choose a
different omniorientation.

To be precise, by the correspondence established in \cite{[LT]},
we can consider ${\bf X}$ as a symplectic toric orbifold with a
simple rational moment polytope $P$ whose facets have inward
normal vectors $e_1, \ldots, e_5$. The moment polytope may be
identified with the orbit space of the torus action. The
denominations of the polytope are related to the choice of the
symplectic form and is not important for us. Denote the facet of
$P$ with normal vector $e_i$ by $F_i$. We assign the
characteristic vectors as follows
\begin{equation}
\lambda_i = \left\{ \begin{array}{ll} e_i & {\rm if}\, 1\le i \le
4 \\ -e_5 & {\rm if}\, i=5. \end{array} \right.
\end{equation}

The singular locus of ${\bf X}$ is the subset $X(F)$ where $F= F_1
\cap F_5$. The group $G_F$ is isomorphic to $\ZZ_3$ and
\begin{equation}G_F^{\circ} = \{ g = \frac{2}{3} \lambda_1 + \frac{1}{3}
\lambda_5 , g^2 = \frac{1}{3} \lambda_1 + \frac{2}{3} \lambda_5 \}
= \{(1,1,1,1), (1,2,2,2) \}. \end{equation} Thus there are only
two twisted sectors $S(F, g)$ and $S(F, g^2)$, each of age one.
Since $F$ is a triangle, the $4$-dimensional quasitoric orbifold
${\bf X}(F)$ has $h^0= h^2= h^4=1$. Therefore each twisted sector
contributes one to $h^k_{CR}({\bf X})$ for $k= 2,4,6$.

We consider a crepant blowup ${\bf Y}$ of ${\bf X}$ along $X(F)$
with $\lambda_0= (1,1,1,1)$.
 The singular locus of ${\bf Y}$
equals $Y(H)$ where $H= F_0 \cap F_5$. $G_H \cong \ZZ_2$ and
$G_H^{\circ} = \{h = \frac{1}{2} \lambda_0 + \frac{1}{2} \lambda_5
\} = \{(1,2,2,2)\}$. The age one twisted sector $S(H,h)$
contributes one to $h^k_{CR}({\bf Y})$ for $k= 2,4,6$. But
$h^2_{CR}({\bf Y})$ also has an additional contribution from the
new facet. Therefore $h^2_{CR}({\bf Y})= h^2_{CR}({\bf X})$. Then
by Poincar\'e duality, $h^6_{CR}$ are also equal. Finally by
conservation of Euler characteristic we get equality of
$h^4_{CR}$.

It is also possible to directly ascertain the change in the
ordinary Betti numbers due to blowup.
 The new facet $F_0$ is diffeomorphic
to $F \times [0,1]$. So the new polytope has three extra vertices.
We can arrange them to have indices $1, 2, 3$ and keep indices of
other vertices unchanged, see \cite{[PS]} for definition of index.
 This means that ordinary homology, and therefore cohomology, of $Y$ is
richer than that of $X$ by a generator in degrees $2,4,6$.

If we perform a further blowup of ${\bf Y}$ along $H$ with
$(1,2,2,2 )$ as the new characteristic vector, we obtain a
quasitoric manifold $Z$. It is easy to observe that Betti numbers
of Chen-Ruan cohomologies of ${\bf Y}$ and $Z$ are equal. If we
switched the choice of characteristic vectors for the two blowups,
McKay correspondence for Betti numbers would still hold.

Finally consider other choices of omniorientation that could make
${\bf X}$ quasi-$SL$. Switching the sign(s) of $\lambda_2$,
$\lambda_3$ or $\lambda_4$ does not affect quasi-$SL$ness or the
calculations of Betti numbers. Another option is to take
$\lambda_1 = -e_1$ and $\lambda_5 = e_5$. The calculations for
this choice are analogous to the ones above.

\section{Ring structure of Chen-Ruan cohomology}\label{ring}

We will follow \cite{[CH]} and define the structure of an associative ring on
Chen-Ruan cohomology of an omnioriented quasitoric orbifold.

 The normal bundle of a characteristic suborbifold has an almost complex
 structure determined by the omniorientation. More generally suppose
 $F= \bigcap_{i=1}^k F_i$ is an arbitrary face of $P$.
  The normal bundle of the
 suborbifold ${\bf S}(F,g)$, see section \ref{PD}, decomposes into the direct sum of complex orbifold line bundles
 $L_i$  which are restrictions of the normal bundles corresponding to facets $F_i$ that contain $F$.
  Each of these line bundles $L_i$ have a Thom form $\theta_i$. (Note that the Thom forms of
  ${\bf X}(F)$ and ${\bf S}(F,g)$ in ${\bf X}$ may differ at most by a constant factor.)
 For any  $g = \sum_{0\le i \le k} a_i\lambda_i \in Box_F^{\circ}$ define the formal form (twist factor)
 \begin{equation}\label{tg}
 t(g) = \prod_{1 \le i \le k}\theta_i^{a_i}.
 \end{equation}
The order of the $\theta_i$s in the above product is not
important. The degree of $t(g)$ is defined to be $2\iota(g)$.
 For any invariant form $\omega$ on ${\bf S}(F,g)$ define a corresponding twisted form $\omega  t(g)$.
 Define the degree of $\omega  t(g)$ to be the sum of the degrees of $\omega $ and $t(g)$.
Define  \begin{equation} \Omega^{p}_{CR}(F,g) = \{ \omega t(g) \mid \omega \in  \Omega^{\ast}({\bf S}(F,g)), \deg(\omega t(g))= p  \}.
         \end{equation}

  Define the de Rham  complex
 of twisted forms by
 \begin{equation}
 {\Omega^{p}}_{CR}= \bigoplus_{F\le P, g \in Box_F^{\circ}} \Omega^{p}_{CR}(F,g)
 \end{equation}
with  differential
\begin{equation}
d( \sum \omega_i t(g_i))= \sum d(\omega_i) t(g_i).
 \end{equation}
  It is easy to see that the  cohomology of this complex coincides with the
   Chen-Ruan cohomology defined in section \ref{crc}.

  Now we define a product $\star: \Omega^{p_1}_{CR}(K_1,g_1) \times \Omega^{p_2}_{CR}(K_2,g_2) \to
  \Omega^{p_1+p_2}_{CR}(K, g_1g_2) $ of twisted forms as follows,
  \begin{equation}\label{wedge}
  \omega_1 t(g_1) \star  \omega_2 t(g_2)= i_1^{\ast} \omega_1 \wedge  i_2^{\ast}  \omega_2 \wedge \Theta(g_1,g_2)   t(g_1g_2).
  \end{equation}
  Here $K$ is the unique face such that $(K_1 \cap K_2) \le K$ and $g_1g_2 \in G_K^{\circ}$.
  The map $i_j$ is the inclusion of  ${\bf X}(K_1 \cap K_2)$ in ${\bf X}(K_j)$. The form $\Theta(g_1,g_2)$ is obtained as follows.

  Consider the product $t(g_1) t(g_2)$.
   We can think of the $g_j$s as elements of $Box_{v}$ where $v$ is a vertex of $K_1 \cap K_2$.
   Write $g_j= \sum_{i=1}^n a_{ij} \lambda_i$. Write the twist factor
   $t(g_j)$ as  $\prod_{1\le i \le n} \theta_i^{a_{ij}}$.
    A term in the product  $t(g_1) t(g_2)$ looks
   $\theta_i^{a_{i1} +a_{i2}}$. We may ignore the $i$'s for which both $a_{i1}$ and
   $a_{i2}$ are zero. Then there can be three cases:
\begin{enumerate}
\item
  $a_{i1} +a_{i2} <1$.  Then $\theta_i^{a_{i1} +a_{i2}}$ contributes to $t(g_1 g_2)$.
\item
 $ a_{i1} +a_{i2} >1$.
 Then fractional  part $\theta_i^{a_{i1} +a_{i2}-1}$ contributes to $t(g_1 g_2)$ and the integral part is the Thom form
 $\theta_i$ which contributes as an invariant $2$-form to $\Theta(g_1,g_2)$.
 \item
$  a_{i1} +a_{i2} = 1$.
When this happens   $g_1 g_2 \in Box_K^{\circ}$ where $(K_1 \cap K_2)< K$ and
$\theta_i$ contributes to $\Theta(g_1,g_2)$.
\end{enumerate}

If case (3) does not occur for any $i$, then $K= K_1 \cap K_2$ and $ i_1^{\ast}\omega_1 \wedge i_2^{\ast} \omega_2 \wedge \Theta(g_1,g_2)$
 restricts to ${\bf S} (K, g_1g_2 )$ without problem. If case (3) occurs for some $i$'s then the product of the restrictions of
  corresponding $\theta_i$s
 to ${\bf X}(K)$ is, up to a constant factor, the
Thom form of the normal bundle of ${\bf X}(K_1 \cap K_2) $ in ${\bf X}(K)$. The wedge of this Thom form with
 $i_1^{\ast}\omega_1 \wedge i_2^{\ast} \omega_2$
and the restriction of the contributions from case (2) to  ${\bf X}(K) $ defines a form on ${\bf X}(K)$. Thus the star product
is well-defined.

 We extend the star product to a product on ${\Omega^{\ast}}_{CR}$ by bilinearity.
  The differential acts on the star product as follows,
  \begin{equation}
  d( \omega_1 t(g_1) \star  \omega_2 t(g_2))= d( \omega_1 t(g_1)) \star  \omega_2 t(g_2)
   +(-1)^{\deg(\omega_1) + \deg(\omega_2)} \omega_1 t(g_1) \star d(\omega_2 t(g_2)).
  \end{equation}
  Hence the star product induces a product on the Chen-Ruan cohomology.

Observe that the form $ i_1^{\ast} \omega_1 \wedge  i_2^{\ast}  \omega_2 \wedge \Theta(g_1,g_2) $
 is supported in a small neighborhood
of $X(K_1 \cap K_2)$. Therefore the star product of three forms
 $\omega_i t(g_i) \in \Omega^{p_i}_{CR}(K_i,g_i) $, $1\le i \le 3$,
is nonzero only if $K_1 \cap K_2 \cap K_3 $ is nonempty. Now it is fairly
 straightforward to check that the star product is associative.

 {\bf Acknowledgement.} We thank Yongbin Ruan for an useful
suggestion. The second author thanks Cheol-Hyun Cho, Shintaro
Kuroki and Bernardo Uribe for helpful and interesting discussions.
 We both thank la Universidad de los Andes for providing financial
support for our research related activities.

\end{document}